\def\createBlock#1#2#3#4{
	\tikzstyle{block}=[rectangle,draw, text centered, minimum height=3em, minimum width=3em, #4]
	\node[block] (#1) at (#2) {#3} ;
}
\def\createPoint#1#2{
	\coordinate (#1) at (#2) ;
}
\def\drawArrow#1#2#3#4#5 {
	\tikzstyle{arrow}=[->,>=latex]
	\draw[arrow] (#1) #3 (#2) node[midway,#5]{#4};
}
\tikzstyle{smallbox} = [
\tikzstyle{sysbox} = [
\tikzstyle{estbox} = [
\tikzset{
    myarrow/.style={->, line width=1mm, >=Stealth[scale=1.5]}
}
\newtheorem{thm}{Theorem}[section]
\newtheorem{assum}{Assumption}[section]
\newtheorem{prop}{Proposition}[section]
\newtheorem{defn}{Definition}[section]
\newtheorem{rem}{Remark}[section]
\newtheorem{lem}{Lemma}[section]
\newcommand{\R}{\mathbb{R}}
\newcommand{\N}{\mathbb{N}}
\newcommand{\xhat}{\hat{x}}
\newcommand{\zhat}{\hat{z}}
\newcommand{\vhat}{\hat{v}}
\newcommand{\dd}{\mathrm{d}}
\renewcommand{\epsilon}{\varepsilon}
\newcommand{\Tx}{T_0}
\newcommand{\Tv}{\mathbb{T}}
\newcommand{\Txv}{T}
\renewcommand{\leq}{\leqslant}
\renewcommand{\le}{\leqslant}
\renewcommand{\geq}{\geqslant}
\DeclareMathOperator*{\argmin}{arg\,min}
\newcommand{\fonction}[5]{
\begin{align*}
\displaystyle
\begin{array}{lrcl}
#1: & #2 & \longrightarrow & #3 \\
    & #4 & \longmapsto & #5
\end{array}
\end{align*}}
\def\createBlock#1#2#3#4{
    \tikzstyle{block}=[rectangle,draw,text centered,align=center,minimum height=3em,minimum width=3em,#4]
    \node[block] (#1) at (#2) {#3};
}
\def\drawArrow#1#2#3#4#5{
    \tikzstyle{arrow}=[->,>=latex]
    \draw[arrow] (#1) #3 (#2) node[midway,#5,align=center]{#4};
}
\title{A Backstepping-KKL observer for a cascade of a nonlinear ODE with
a heat equation \footnote{This project received funding from the Agence Nationale de la Recherche via grant PANOPLY ANR-23-CE48-0001-01.}}
\author[1]{Adam Braun}
\author[1]{Lucas Brivadis}
\author[1]{Jean Auriol}
\affil[1]{\small Université Paris-Saclay, CNRS, CentraleSupélec, Laboratoire des Signaux et Systèmes, 91190, Gif-sur-Yvette, France
(email: \texttt{firstname.lastname@centralesupelec.fr})
}%
\date{\today}
\begin{document}
\maketitle

\begin{abstract}
We propose an observer design for a cascaded system composed of an arbitrary nonlinear ordinary differential equation (ODE) with a 1D heat equation.
The nonlinear output of the ODE imposes a boundary condition on one side of the heat equation, while the measured output is on the other side.
The observer design combines an infinite-dimensional Kazantzis-Kravaris/Luenberger (KKL) observer for the ODE with a backstepping observer for the heat equation.
This construction is the first extension of the KKL methodology to infinite-dimensional systems. The dynamics are embedded into an appropriate target system via a Backstepping-KKL map. Under a differential observability condition on the ODE, this embedding is shown to be injective. Moreover, observer convergence is guaranteed if the inverse map is uniformly continuous.
The effectiveness of the proposed approach is illustrated in numerical simulations.
\end{abstract}

\section{Introduction}
Parabolic partial differential equations (PDEs) naturally arise in the mathematical modeling of population dynamics~\cite{murray2002mathematical},~\cite{kolmogorov1937study}, heat conduction and diffusion processes~\cite{crank1979mathematics}, making the analysis of their controllability and observability fundamental topics in control theory.
The backstepping method was initially developed for parabolic PDEs to address control and observation problems. Since its inception, this approach has rapidly evolved and has been extended to the control of a broader class of PDEs. Comprehensive expositions of the method can be found in~\cite{krstic_book} and, more recently, in~\cite{VAZQUEZ2026112572}.
In recent years, significant progress has been made in applying the backstepping method to interconnected systems composed of linear Ordinary Differential Equations (ODEs) and Partial Differential Equations (PDEs). Such configurations are particularly relevant for modeling actuator and load dynamics~\cite{auriol2022comparing}. The backstepping approach was first applied to actuators governed by diffusion dynamics, as in~\cite{KRSTIC2009372}, where a stabilizing state-feedback control law and an observer were constructed for a cascade system in which a heat equation influences a linear ODE. This result was later generalized in~\cite{TANG20112142} to cover systems where the ODE also acts on the heat equation.
The method was subsequently extended to other classes of ODE-PDE interconnections, including hyperbolic PDEs, as in~\cite{FERRANTE2020109027}, where a Lyapunov matrix inequality approach is employed, and in~\cite{deutscher2017outputfeedbackcontrolgeneral,auriol2022observer}. More complex configurations have also been investigated, such as systems where a PDE is sandwiched between two ODEs, with results for the heat equation presented in~\cite{wang2019outputfeedbackboundarycontrolheat} and for hyperbolic PDEs in~\cite{DEUTSCHER2018472,redaud2024output}.
Concerning cascades of ODEs with linear PDEs,
the contributions in~\cite{ode_pde_Giri,Cai2016ObserverDesignCascade}, as well as the adaptive extension in~\cite{Benabdelhadi2023AdaptiveObserver}, propose observer designs that combine high-gain techniques for the nonlinear ODE with backstepping methods for the PDE. These works have shown that such a strategy can successfully reconstruct the state of the overall ODE-PDE cascade and establish convergence of the estimation error.
However, they suffer the usual limitations of the high-gain design for nonlinear ODEs: they induce a peaking phenomenon, and require the nonlinear system to be in a normal triangular form \cite{bernard2019observer}.


To overcome these issues, we propose to explore a different direction based on the combination of the backstepping approach with the Kazantzis--Kravaris/Luenberger (KKL) observer. This observer was originally introduced in~\cite{kazantzis1998nonlinear, andrieu2006existence}, as an extension to nonlinear systems of the seminal Luenberger methodology developed for linear systems in~\cite{Luenberger64}. A recent presentation of this theory can be found in~\cite{further_remarks}.
The KKL framework applies to any nonlinear ODEs, without structural restrictions. The price of this generality is that computing the observer requires the inversion of a nonlinear map, commonly referred to as the KKL mapping, which is generally difficult to characterize explicitly or compute. Nevertheless, several numerical methods have been proposed to approximate this mapping and implement the corresponding observer in practice. In particular, recent works have shown that efficient approximations can be obtained using neural-network-based approaches, see for instance~\cite{deepKKL,BuissonFenet2023RecognitionModels}.
Since the observer proposed in this article can be interpreted as an infinite-dimensional extension of the KKL observer, a possible direction for future research is to adapt these numerical methods to the high-dimensional setting.




In the present paper, we rather focus on designing the observer.
We consider a cascade system formed by a nonlinear ODE (with state $x(t)$ in $\R^n$) and a one-dimensional heat equation (with state $v$ in $L^2(0,1)$). The ODE influences the PDE through a Dirichlet boundary condition ($v(t,1) = h(x(t))$), while the output $y$ of the overall cascade system is measured at the opposite boundary ($y=v(t,0)$) as depicted in Figure~\ref{Dessin cascade}.

\begin{figure}[ht!]
    \centering
    \begin{tikzpicture}[scale=0.75, transform shape]
		
    
    \createBlock{ODE}{-18em,0}{Nonlinear ODE $\dot x = f(x)$}{text width = 10em};
    \createBlock{PDE}{0em,0}{
    1D heat equation
    }{text width = 10em};
    \createPoint{out}{10em,0} ;
    \drawArrow{ODE}{PDE}{--}{$v(t,1)=h(x)$}{above} ;
    \drawArrow{PDE}{out}{--}{$\qquad\qquad y=v(t,0)$}{above} ;
    \end{tikzpicture}
     \caption{Representation of the cascaded system}
    \label{Dessin cascade}
\end{figure}

The observer is designed  by employing a KKL observer for the nonlinear ODE and a backstepping observer for the linear PDE. The motivation for this work originates from the observation of structural similarities between backstepping observers and KKL observers in the context of linear dynamics, please refer to Section~\ref{KKL dim finie} for more details.
We will adopt the methodology of~\cite[Section 4]{further_remarks}, which addresses observer design for cascaded systems consisting of a nonlinear ODE followed by a linear ODE in normal form.

The structure of this article is the following. In Section~\ref{problem statement}, we present the cascade system and discuss its well-posedness. In Section~\ref{KKL dim finie}, we review recent developments on KKL observers in finite dimension. In Section~\ref{Observer naive},
we formally design our Backstepping-KKL observer, obtained by inverting a map $T$ which is the sum of a backstepping transform $\Tv$ and a KKL transform $T_0$. In Section~\ref{construction of T0}, we show the existence of $T_0$. In Section~\ref{injectivity of KKL}, we prove the injectivity of $T$ under a differential observability assumption and we state the main result of the article, which is the convergence of the observer towards the state under that observability assumption. In Section~\ref{numerics}, we present several examples corresponding to specific choices of ordinary differential equations, along with numerical simulations that illustrate the performance of the proposed observer design.
\paragraph{Notation.}
Throughout this article, we use the following notations.
\begin{itemize}
    \item $L^2(0,1)$ is the  space of real-valued square integrable functions defined almost everywhere on $(0,1)$. The associated norm is denoted $\|\cdot\|_{L^2}$.
    \item $C^m(I, \mathcal{B})$ denotes the set of functions from $I\subset\R$ to the Banach space $\mathcal{B}$ that are $m$ times continuously Fréchet differentiable.
    \item $|\cdot|$ is the euclidean norm on $\R^n$.
    \item For any vector field $f:\R^n\to\R^n$ and any map $h:\R^n\to\R^m$, we denote by $L_f^kh$ the $k$-th iterate Lie derivative of $h$ with respect to $f$ when it makes sense.
\end{itemize}

\section{Problem statement and well-posedness} \label{problem statement}
Consider the following ODE-PDE cascade system
\begin{equation}\label{eq:syst}
\left\{
\begin{aligned}
   & \dot x(t) = f(x(t))
    \\
    &\partial_tv(t, \lambda) = \partial_{\lambda}^2 v(t, \lambda) + \alpha v(t, \lambda)
    \\
    &\partial_{\lambda}v(t, 0) = 0
    \\
    &v(t, 1) = h(x(t))
    \\
   & y(t) = v(t, 0),
\end{aligned}
\right.
\end{equation}
where $v$ defined on $[0,\infty)\times[0,1]$ is the state of the 1D heat equation and $x$ defined on  $[0,\infty)$ is the ODE state, $y:\R_+\to \R$ is the scalar measured output, $f:\R^n\to\R^n$ and $h:\R^n\to\R$ are nonlinear maps, and $\alpha \in \mathbb{R}$.
Let $\mathcal{X}_0 \subset \R^n$ be an open set, representing initial conditions of interest.
We assume that the Cauchy problem $(\dot x = f(x),\ x(0)=x_0)$ admits a unique forward complete solution for any $x_0\in\mathcal{X}_0$, denoted by $t\mapsto X(t;x_0)$.
Let $\mathcal{X}\subset\R^n$ be the set containing the trajectories of interest of the ODE, i.e $\mathcal{X} = \{ X(t;x_0): x_0\in \mathcal{X}_0, ~t\geq0\}$.
We assume that $h$ is continuous, so that the 1D heat equation appearing in \eqref{eq:syst} has homogeneous Neumann condition at one boundary and nonhomogeneous Dirichlet condition at the other (see e.g. \cite[Chapter~7]{evans10} for well-posedness).
Therefore, for all $x_0 \in \mathcal{X}_0$ and all $v_0 \in L^2(0,1)$, there exists a unique solution $(x,v)\in C^0(\R_+, \R^n\times L^2(0,1))$ to the cascade system~\eqref{eq:syst} satisfying the initial conditions
$
x(0) = x_0, v(0,\cdot) = v_0.
$


In the following, for any trajectory of interest $(x,v)$ of the cascade system~\eqref{eq:syst}, we will design an observer  based on the sole online measurement of $y$. More precisely,
we will design
$(\xhat(t), \vhat(t))$ satisfying
\begin{equation*}
    \left | \xhat(t) - x(t)\right | + \|\vhat(t) - v(t)\|_{L^2} \underset{t\to+\infty}{\longrightarrow} 0.
\end{equation*}
Our observer design relies on the standard differential observability assumption for the ODE (see e.g \cite[Definition 4.2, Part 1, Section 2.4]{Gauthier_Kupka_2001}).
\begin{assum}[Differential observability]\label{diff_obs}
    There exists a non-negative integer $m$ such that the function
    \fonction{H}{\mathcal{X}}{\R^m}{x}{(h(x), L_f h(x), \hdots, L_f^{m-1}h(x))}
exists and
is injective.
\end{assum}
\begin{rem}
Under the additional assumption that $H$ is an immersion, it transforms the original ODE into a triangular form in which a high-gain observer can be designed for system \eqref{eq:syst} in the manner of \cite{ode_pde_Giri,Cai2016ObserverDesignCascade}.
\end{rem}
In addition, we impose the following auxiliary assumption, which can be readily verified in practice (see Remark~\ref{traj negative}).
\begin{assum}\label{ass:bound}
Either
\begin{enumerate}
    \item $\mathcal X$ is bounded; \emph{or}
    \item there exists $\gamma_0>0$ such that, for every $x\in \mathcal X_0$, $X(t;x)$ is well defined for all $t\le 0$, and there exists $C=C(x)>0$ for which
    \begin{equation} \label{ineq:gamma0}
    e^{\gamma_0 t}\lvert h(X(t;x))\rvert \le C,
    \qquad \forall t\le 0.
    \end{equation}
\end{enumerate} 
\end{assum}
When the first item of Assumption~\ref{ass:bound} is satisfied, we set $\gamma_0 = 0$ in the following.

\begin{rem}
Assuming that $\mathcal X$ is bounded holds 
in general for the purpose of observer design, as we are concerned in estimating trajectories of interest remaining in a given bounded set \cite{bernard2022observer}.
    \label{traj negative}
    The second alternative is satisfied if either
    it holds that $|f(x)|\leq f_0+f_1|x|$
    and $|h(x)|\leq h_0+h_1|x|^q$ for all $x\in\R^n$ for some positive constants $f_0, f_1, h_0, h_1, q$,
    or if 
    $X(t;x)$ is well defined for all $t<0$ and $h$ is bounded.
\end{rem}

\section{Link between the KKL and backstepping observers}
\label{KKL dim finie}

In this section, we recall the foundations of the KKL methodology in its original finite-dimensional setting and highlight conceptual parallels with the backstepping method for parabolic PDEs.
 Let us consider a system of the form
\begin{equation}\label{meta ODE}\dot{x} =f(x)~, ~y=h(x),\end{equation}
where $f:\R^n\to \R^n$ and $h:\R^n\to \R$ are smooth functions, $x$ is the state of the system and $y$ the output. If there exists a map $T:\R^n\to \R^m$ (with $m$ an integer typically larger than $n$)  such that $z=Tx$ (with $x$ solution of~\eqref{meta ODE}) satisfies 
\begin{equation}
    \label{meta dynamique stable}
\dot{z} = Az + By,\end{equation} with $A\in \R^{m\times m}$ an Hurwitz matrix and $B\in \R^m$,
and if $T$ admits a uniformly continuous left-inverse $T^{-1}:\R^m\to\R^n$,
then a KKL observer is given by $\hat{x} = T^{-1}(\zhat)$ with $\zhat$ any solution of~\eqref{meta dynamique stable}. Indeed, $\tilde{z} = \zhat-z$ satisfies the contracting dynamic $\tilde{z} = A\tilde{z}$, hence $\zhat$ converges to $z$ as $t\to \infty$ and using the uniform continuity of $T^{-1}$, $\xhat$ converges to $x$ as $t\to \infty$. 
The KKL observer design methodology is summarized in Figure~\ref{schema:KKL}.
\begin{figure}[H]
\centering
\begin{tikzpicture}[scale=0.75, transform shape]

    \createBlock{in}{0,0}{$x(t),\,y_0(t)$}{text width = 10em}

    \createBlock{kkl}{8,0}{
        {\bfseries KKL Target system}\\[0.15cm]
        $\dot z = Az + By_0$
    }{text width = 10em}

    \createBlock{xhat}{0,-2}{$\hat x(t)$}{text width = 10em}

    \createBlock{zhat}{8.0,-2}{$\dot{\hat z} = A\hat z + By_0$}{text width = 10em}

    \drawArrow{in}{kkl}{--}{KKL mapping $T_0$}{above}
    \drawArrow{zhat}{xhat}{--}{Left Inverse $T_0^{-1}$}{above}

\end{tikzpicture}
\caption{Flow chart for the KKL observer design.}
\label{schema:KKL}
\end{figure}

The injectivity of $T$ is generally proved under an observability assumption like the backward distinguishability (as in~\cite{further_remarks}) or like in our case with Assumption~\ref{diff_obs}, the differential observability (as in~\cite{bernard2022observer}).

It is worth emphasizing that the design of an observer for a linear PDE, such as the one-dimensional heat equation, via a backstepping transformation as presented in~\cite[Chapter~5]{krstic_book}, is conceptually equivalent to the construction of a Kazantzis--Kravaris--Luenberger (KKL) observer.
 In this setting, the backstepping transformation plays a role analogous to that of the operator~$T$ introduced earlier. In~\cite[Chapter 5]{krstic_book}, the PDE observer is built as a copy of the plant dynamics augmented by a correction term proportional to the discrepancy between the outputs of the observer and the system. The invertible backstepping transformation is then used to map the estimation error dynamics into an exponentially stable heat equation.
Because the system is linear, one could equivalently apply the backstepping transformation to both the system and the observer from the outset, and then analyze the estimation error in the transformed coordinates, again yielding an exponentially stable heat equation. This viewpoint closely parallels the use of the mapping~$T$ in the construction of a KKL observer.
The previous explanation is summarized in Figure~\ref{schema:backstepping}.

\begin{figure}[H]
\centering
\begin{tikzpicture}[scale=0.75, transform shape]

    \createBlock{in}{0,0}{$v(t),\,y(t)$}{text width = 10em}

    \createBlock{target}{8.5,0}{
        {\bfseries Backstepping Target system}\\[0.15cm]
        $\partial_t z(t,\lambda)=\partial_\lambda^2 z(t,\lambda)-\gamma z(t,\lambda)+p_1(\lambda)y(t)$\\[0.15cm]
        $\partial_\lambda z(t,0)=p_{10}y(t)$\\
        $z(t,1)=u(t)$
    }{}

    \createBlock{vhat}{0,-3.0}{$\hat v(t)$}{text width = 10em}

    \createBlock{zhat}{8.6,-3.0}{
        $\partial_t \hat z(t,\lambda)=\partial_\lambda^2 \hat z(t,\lambda)-\gamma \hat z(t,\lambda)+p_1(\lambda)y(t)$\\[0.15cm]
        $\partial_\lambda \hat z(t,0)=p_{10}y(t)$\\
        $\hat z(t,1)=u(t)$
    }{text width = 10em}

    \drawArrow{in}{target}{--}{Backstepping $\Tv$}{above}
    \drawArrow{zhat}{vhat}{--}{Inverse\\ $\Tv^{-1}$}{above}

\end{tikzpicture}
\caption{Flow chart for the backstepping observer design.}
\label{schema:backstepping}
\end{figure}

\section{Observer design using a formal  approach }
\label{Observer naive}
Following the outline of the KKL construction in the finite-dimensional case, we aim to define a mapping $T$ that sends the state $(x,v)$ of the cascade~\eqref{eq:syst} to a KKL target system of the form~\eqref{meta dynamique stable}. The main difference is that we propose the KKL target system to be a PDE, with the mapping $T$ defined as $T(x,v) = T_0(x) + \Tv(v)$ where $\Tv$ is a classical backstepping transformation used to design an observer for an unstable heat equation, and $T_0$ is an infinite-dimensional KKL mapping associated with the ODE.
Adapting~\cite[Chapter 5]{krstic_book}, we define the following invertible backstepping transformation $\Tv$, mapping any solution $v$ of the PDE in~\eqref{eq:syst}
to a solution $z_{pde}=\Tv(v)$ to the PDE-target system~\eqref{PDE-Target_system}:
    \fonction{\Tv}{L^2(0, 1)}{L^2(0, 1)}{v}{\Big(\lambda\mapsto v(\lambda)-\int_0^\lambda p(\lambda, \tilde \lambda)v(\tilde \lambda)\dd \tilde \lambda \Big)}

where $p\in L^2((0, 1)^2)$ satisfies the following kernel equations:
\begin{equation}
\begin{aligned}\label{kernel equations p}
   & p_{\lambda \lambda}(\lambda, \tilde{\lambda})-p_{\tilde{\lambda}\tilde{\lambda}}(\lambda, \tilde{\lambda}) = (\alpha+\gamma) p(\lambda, \tilde{\lambda})\\
    &\frac{dp}{d\lambda}(\lambda, \lambda) = -(\alpha + \gamma)/2\\
    &p(1, \tilde{\lambda})=0,
\end{aligned}
\end{equation}
and $\gamma > \gamma_0$ (in particular, $\alpha+\gamma>0)$ is a tuning  parameter. We stress that $\gamma$ does not play the role of a high-gain parameter. Rather, it only needs to satisfy $\gamma>\gamma_0$, ($\gamma_0=0$ when the first item of Assumption~\eqref{ass:bound} is satisfied).
Using the method of successive approximations~\cite{krstic_book}, we can explicitly compute the backstepping kernel and prove that the following $p$ is the unique solution to the system of equations~\eqref{kernel equations p}
\[
p(\lambda, \tilde{\lambda}) = (\alpha + \gamma)(1 - \lambda) \cdot 
\frac{J_1\Bigl(\sqrt{(\alpha + \gamma)\bigl(\tilde{\lambda}^2 - \lambda^2 + 2(\lambda - \tilde{\lambda})\bigr)}\Bigr)}
{\sqrt{(\alpha + \gamma)\bigl(\tilde{\lambda}^2 - \lambda^2 + 2(\lambda - \tilde{\lambda})\bigr)}},
\]
where $J_1$ is a Bessel function of the first kind.
Clearly, $p$ is analytic since $J_1(z)/z\to 1/2$ as $z\to0$.
The PDE-target system is defined for all $t\geq 0$ and all $\lambda \in [0,1]$ by 
\begin{equation}\label{PDE-Target_system}
\left\{
\begin{aligned}
    &\partial_t z_{\textrm{pde}}(t, \lambda) = \partial_{\lambda}^2 z_{\textrm{pde}}(t, \lambda)-\gamma z_{\textrm{pde}}(t,\lambda) + p_1(\lambda)y(t) 
    \\
    &\partial_{\lambda}z_{\textrm{pde}}(t, 0) = p_{10}y(t)
    \\
    &z_{\textrm{pde}}(t, 1) = h(x(t)),
\end{aligned}
\right. 
\end{equation}
with $p_1(\lambda):=-\partial_{\tilde{\lambda}}p(\lambda,0)$ and $p_{10} = -p(0,0) = -\frac{1}{2}(\alpha + \gamma)\neq 0$. 
We now define the KKL target system by removing the unknown term $h(x(t))$ from the Dirichlet boundary condition at $\lambda = 1$. We have
\begin{equation}\label{KKL-target system}
\left\{
\begin{aligned}
    &\partial_tz(t, \lambda) = \partial_{\lambda}^2z(t, \lambda)-\gamma z(t,\lambda) + p_1(\lambda)y(t) 
    \\
    &\partial_{\lambda}z(t, 0) = p_{10}y(t)
    \\
    &z(t, 1) = 0.
\end{aligned}
\right.
\end{equation}
 Observe that system~\eqref{KKL-target system} shares the structure of~\eqref{meta dynamique stable} where $A$ is now an operator defined on $D(A):=\{z\in H^2(0, 1)\mid \partial_ {\lambda}z(0) = 0 \text{ and } z(1) = 0\}$ by $Az = \partial_{\lambda}^2z -\gamma z$. The operator $A$ induces a contracting dynamic ($\gamma$ is a degree of freedom introduced to accelerate the contraction, see for instance~\cite{krstic_book}), and $B$ is the operator acting both on the domain (with  $p_1(\cdot)$) and on the boundary at $\lambda=0$ (with  $p_{10}$). It is defined similarly to~\cite[Chapter 10]{tucsnak2009observation}.
Now, assume there exists a mapping \(T_0\) such that  
\[
z = T(x,v) \;=\; T_0(x) + \Tv(v),
\]
is a solution of~\eqref{KKL-target system} whenever \((x,v)\) is a solution of the cascade system~\eqref{eq:syst}.  
Then, as in the finite-dimensional case, if \(T\) is invertible with a uniformly continuous inverse, an observer can be defined as  
\[
(\hat{x}, \hat{v}) \;=\; T^{-1}(\hat{z}),
\]
where \(\hat{z}\) denotes any solution of~\eqref{KKL-target system}.
The operator $T_0$ we are looking for needs to follow a heat equation like \eqref{KKL-target system} along the trajectories of the ODE of the cascade~\eqref{eq:syst} and needs to cancel the boundary term $h(x(t))$.
Hence we want to ensure the existence of a map $T_0$ such that for all $x \in \mathcal{X}_0$, all $t\geq 0$ and all $\lambda \in [0,1]$,
\begin{equation}\label{T_0 naif}
\left\{
\begin{aligned}
    &\partial_t T_0(X(t;x), \lambda) = \partial_{\lambda}^2T_0(X(t;x), \lambda)-\gamma T_0(X(t;x),\lambda)  
    \\
    &\partial_{\lambda}T_0(X(t;x), 0) = 0
    \\
    &T_0(X(t;x), 1) = -h(X(t;x)).
\end{aligned}
\right.
\end{equation}
The observer design methodology is summarized in Figure~\ref{fig:flow_chart}

\begin{figure}[H]
\centering
\begin{tikzpicture}[scale=0.75, transform shape]

\createBlock{signals}{0,0}{
$x(t),\; v(t)$ \\[4pt]
$y(t)=v(t,0)$
}{
minimum width=3.0cm,
minimum height=1.5cm,
align=center
}

\createBlock{pdesys}{9,0}{
{\bfseries PDE-Target system}\\[6pt]
$\displaystyle
\begin{aligned}
\dot{x}(t) &= f(x(t)) \\
\partial_t z_{\mathrm{pde}}(t,\lambda)
&= \partial_{\lambda}^2 z_{\mathrm{pde}}(t,\lambda)
   - \gamma z_{\mathrm{pde}}(t,\lambda)
   + p_1(\lambda)\,y(t) \\
\partial_{\lambda} z_{\mathrm{pde}}(t,0) &= p_{10}\,y(t) \\
z_{\mathrm{pde}}(t,1) &= h(x(t))
\end{aligned}
$
}{
align=center
}

\createBlock{kklsys}{9,-6}{
{\bfseries KKL-target system}\\[6pt]
$\displaystyle
\begin{aligned}
\partial_t z(t,\lambda)
&= \partial_{\lambda}^2 z(t,\lambda)
   - \gamma z(t,\lambda)
   + p_1(\lambda)\,y(t) \\
\partial_{\lambda} z(t,0) &= p_{10}\,y(t) \\
z(t,1) &= 0
\end{aligned}
$
}{
align=center
}

\createBlock{estimation}{0,-9}{
$\hat x(t),\; \hat v(t)$
}{
minimum width=3.0cm,
minimum height=1.5cm,
align=center
}

\createBlock{kklsys_hat}{9,-9}{
{\bfseries Estimated KKL system}\\[6pt]
$\displaystyle
\begin{aligned}
\partial_t \hat z(t,\lambda)
&= \partial_{\lambda}^2 \hat z(t,\lambda)
   - \gamma \hat z(t,\lambda)
   + p_1(\lambda)\,y(t) \\
\partial_{\lambda} \hat z(t,0) &= p_{10}\,y(t) \\
\hat z(t,1) &= 0
\end{aligned}
$
}{
align=center
}

\drawArrow{signals.east}{pdesys.west}{--}{Backstepping $\Tv$}{above}

\drawArrow{pdesys.south}{kklsys.north}{--}{$+\;$KKL $\Tx$}{right}

\draw[->,>=latex] (signals.south east) -- (kklsys.north west)
    node[midway, sloped, above, align=center] {Backstepping--KKL}
    node[midway, sloped, below, align=center] {$T(x,v)=\Tv(v)+\Tx(x)$};

\drawArrow{kklsys_hat.west}{estimation.east}{--}{\textbf{Left Inverse} $T^{-1}$}{above}

\end{tikzpicture}
\caption{Flow chart for the observer design.}
\label{fig:flow_chart}
\end{figure}
\begin{rem}
One can notice that if it exists, the operator $x \mapsto T_0(x, \cdot)$ maps any solution $x$ of the ODE in the cascade~\eqref{eq:syst} to a solution $z_\mathrm{ode} = T_0(x,\cdot)$ of the target system defined for all $t\geq 0$, and all $\lambda \in [0,1]$ by
    \begin{equation}\label{eq:z_ode}
\left\{
\begin{aligned}
    &\partial_t z_\mathrm{ode}(t, \lambda) = \partial_{\lambda}^2 z_\mathrm{ode}(t, \lambda)-\gamma  z_\mathrm{ode}(t, \lambda)
    \\
    &\partial_{\lambda} z_\mathrm{ode}(t, 0) = 0
    \\
    & z_\mathrm{ode}(t, 1) = -h(x(t)),
\end{aligned}
\right. 
\end{equation}
so that the sum of a solution to \eqref{PDE-Target_system} and a solution to \eqref{eq:z_ode} is indeed a solution to \eqref{KKL-target system}.
\end{rem}

\begin{rem}\label{notation lambda}
    The choice of the notation $\lambda$ for the spatial variable is motivated by the finite-dimensional KKL framework, where the KKL mapping is obtained by solving a Sylvester equation in a basis in which the matrix $A$ of the target system~\eqref{meta dynamique stable} is diagonal, with eigenvalues $\lambda_i$. In the infinite-dimensional generalization of the KKL approach, the steady-state problem plays the role of the Sylvester equation, and the spatial variable $\lambda$ serves as the analogue of the eigenvalues $\lambda_i$.
\end{rem}
    
\section{Existence of \texorpdfstring{$T_0$}{T0}}
\label{construction of T0}

In this section, we prove the existence of the operator $T_0$ such that~\eqref{T_0 naif} is satisfied.
To this end, let us consider the following auxiliary heat equation problem without initial condition. For $\gamma >\gamma_0$, for any continuous input $u$, for all $t\leq 0$, for all $\lambda \in [0,1]$, we consider the PDE system
\begin{equation}\label{stable heat}\begin{cases}
    &\partial_t w_u(t,\lambda) = \partial_{\lambda}^2 w_u(t,\lambda) - \gamma w_u(t,\lambda)\\
    &\partial_{\lambda}w_u(t,0)=0 \\
    &w_u(t,1) = u(t).
\end{cases}\end{equation}

This so called \emph{steady state} problem was initially investigated by Tikhonov in~\cite[Chapter III, section 3.4]{TikhonovSamarskii1990}, where he established the existence and uniqueness of a bounded solution in the case of an infinite space domain, $\lambda \in [0, \infty)$. 
In the following lemma, using Tikhonov's result, we demonstrate the existence and the uniqueness of bounded solution of~\eqref{stable heat} in the classical sense.
\begin{lem}\label{pb stable heat bien posé}
  For every \( u \in C^{0}((-\infty, 0])  \) such that $t \to e^{\gamma t}u(t)$ is bounded on $(-\infty, 0]$, the problem~\eqref{stable heat} admits a unique solution \( w_u \in C^1((-\infty, 0], C^2([0,1])) \) such that there exists $C>0$ such that for all $t\leq0$ 
$$
\|w_u(t,\cdot)\|_{L^2(0,1)} < C e^{-\gamma t}.
$$ Moreover, for all $t\leq0$, $w_u(t,\cdot)$ is an element of $\mathcal{A}$, where $\mathcal{A}$ is the space of analytical functions $v$ in $L^2(0,1)$ such that for all $k\in \N$, 
    $\frac{d^{2k+1}v}{d \lambda^{2k+1}}(0)=0$.
\end{lem}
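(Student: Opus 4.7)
The plan is to prove existence by a spectral variation-of-constants construction adapted to the backward-in-time ``steady-state'' setting, to obtain spatial regularity and analyticity via parabolic smoothing of the associated Green's kernel together with an even reflection at $\lambda=0$, and to prove uniqueness by an energy estimate exploiting the spectral gap of the Neumann--Dirichlet Laplacian. As a preliminary reduction I would set $W(t,\lambda) := e^{\gamma t}w_u(t,\lambda)$ and $U(t) := e^{\gamma t}u(t)$, which turns \eqref{stable heat} into the pure heat equation $\partial_t W = \partial_\lambda^2 W$ with $\partial_\lambda W(t,0)=0$ and $W(t,1)=U(t)$, where $U$ is bounded on $(-\infty,0]$ by hypothesis; the claimed exponential estimate on $w_u$ then reduces to a uniform $L^2$ bound on $W$.

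For existence I would use the orthonormal eigenbasis $\phi_k(\lambda) = \sqrt{2}\cos((k+\tfrac12)\pi\lambda)$, with eigenvalues $\mu_k = ((k+\tfrac12)\pi)^2$, of $-\partial_\lambda^2$ with Neumann at $0$ and Dirichlet at $1$. Writing $w_u(t,\lambda)=\sum_k a_k(t)\phi_k(\lambda)$ and integrating by parts against $\phi_k$ to absorb the inhomogeneous datum at $\lambda=1$ produces the scalar ODEs $\dot a_k = -(\mu_k+\gamma)a_k + c_k u(t)$ with $c_k := -\phi_k'(1) = \sqrt{2}(k+\tfrac12)\pi(-1)^k$. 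Since we seek a solution bounded as $t\to-\infty$ rather than an initial-value solution, the natural choice is the stable-manifold formula
\begin{equation*}
a_k(t)=c_k\int_{-\infty}^{t} e^{-(\mu_k+\gamma)(t-s)}u(s)\,ds,
\end{equation*}
whose convergence follows from $|u(s)|\leq C e^{-\gamma s}$ and $\mu_k+\gamma-\gamma=\mu_k>0$. A direct estimate gives $|a_k(t)|\leq C|c_k|/\mu_k \cdot e^{-\gamma t}$, and since $|c_k|/\mu_k=O(1/k)$, Parseval's identity yields $\|w_u(t,\cdot)\|_{L^2}\leq C' e^{-\gamma t}$.

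For the spatial regularity and the analytic condition I would rewrite the formula as a convolution
\begin{equation*}
w_u(t,\lambda)=\int_0^{\infty} K(\tau,\lambda)u(t-\tau)\,d\tau,\qquad K(\tau,\lambda)=\sum_{k\geq 0} c_k\phi_k(\lambda)e^{-(\mu_k+\gamma)\tau},
\end{equation*}
where $K$ is, up to sign, the $\tilde\lambda$-derivative at $\tilde\lambda=1$ of the Neumann--Dirichlet heat semigroup kernel. For every $\tau>0$ the exponential damping $e^{-(\mu_k+\gamma)\tau}$ overwhelms the polynomial growth of $c_k$ and of all derivatives of $\phi_k$, so $K(\tau,\cdot)$ is real-analytic on $[0,1)$ with derivatives uniformly controlled on compacts by a decaying power of $\tau$; this is the classical parabolic smoothing property. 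It yields $w_u \in C^1((-\infty,0],C^2([0,1]))$, the $C^1$-regularity in $t$ being inherited from the PDE itself once the $C^2$ spatial regularity is known. For the condition defining $\mathcal A$, each $\phi_k$ is even in $\lambda$ about $0$, so the even extension of $w_u(t,\cdot)$ to $(-1,1)$ still satisfies the same heat equation; classical interior analytic regularity for parabolic equations with analytic coefficients then makes the extension real-analytic near $\lambda=0$, and evenness forces all odd-order derivatives at $\lambda=0$ to vanish.

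For uniqueness, the difference $\delta$ of two solutions satisfies the homogeneous problem; the energy identity combined with the Poincaré-type inequality $\|\partial_\lambda\delta\|_{L^2}^2\geq \mu_0\|\delta\|_{L^2}^2$ gives $\|\delta(t)\|_{L^2}^2\leq \|\delta(t_0)\|_{L^2}^2 e^{-2(\mu_0+\gamma)(t-t_0)}$, and inserting $\|\delta(t_0)\|_{L^2}\leq 2Ce^{-\gamma t_0}$ produces an overall prefactor $e^{2\mu_0 t_0}$ that tends to $0$ as $t_0\to-\infty$, forcing $\delta\equiv 0$. I expect the most delicate step to be the analyticity statement at $\lambda=0$: since $u$ is only continuous, no spatial analyticity can be inherited from the boundary data, so the argument must rely entirely on the parabolic smoothing of the kernel $K$ for $\tau>0$ together with a careful treatment of the Neumann boundary via even reflection.
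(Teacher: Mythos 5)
Your uniqueness argument and your treatment of membership in $\mathcal{A}$ coincide in substance with the paper's: the paper also uses the energy functional $V(t)=\tfrac12\|w(t,\cdot)\|_{L^2}^2$, a Poincar\'e-type inequality (with a cruder constant than your $\mu_0$, which changes nothing), and the limit $t_0\to-\infty$; and it also derives the vanishing of odd derivatives at $\lambda=0$ from evenness of the kernel, exactly as you do from evenness of the $\phi_k$. Where you genuinely diverge is the existence construction. The paper does not use a spectral expansion: it builds the solution from Tikhonov's explicit Gaussian kernel $G(\lambda,\tau)=\frac{\lambda}{2\sqrt{\pi}\tau^{3/2}}e^{-\lambda^2/4\tau}$ via the method of images, $w_q(t,\lambda)=\int_{-\infty}^t\psi_q(\tau)\bigl(G(\lambda-1,t-\tau)-G(\lambda+1,t-\tau)\bigr)\,d\tau$, which enforces the Neumann condition at $0$ by symmetry but only approximately enforces the Dirichlet condition at $1$; the effective boundary density $\psi_q$ is then obtained by solving an infinite-horizon Volterra equation of the second kind, whose solvability is checked through the Laplace transform identity $1+e^{-2\sqrt{s}}\neq 0$ for $\mathrm{Re}(s)\geq 0$. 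Your stable-manifold formula for the Fourier coefficients $a_k$ is cleaner and gives the $L^2$ bound $\|w_u(t,\cdot)\|_{L^2}\leq C'e^{-\gamma t}$ essentially for free via Parseval, bypassing the Volterra machinery entirely; the paper's route buys an explicit closed-form kernel from which pointwise regularity and evenness can be read off directly.

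Two steps of your existence argument are under-justified and would need repair. First, the coefficient ODEs $\dot a_k=-(\mu_k+\gamma)a_k+c_ku(t)$ are \emph{necessary} conditions obtained by integrating by parts under the assumption $w(t,1)=u(t)$; since every $\phi_k$ vanishes at $\lambda=1$, the series $\sum_k a_k(t)\phi_k(\lambda)$ does not exhibit its boundary trace termwise, so you must separately verify that your candidate actually attains the Dirichlet datum (e.g., by resumming into the image-kernel representation, or by approximating $u$ by smooth data and passing to the limit). Second, the assertion that the derivatives of $K(\tau,\cdot)$ are ``uniformly controlled on compacts by a decaying power of $\tau$'' does not follow from the termwise estimate, which gives $|\partial_\lambda^jK(\tau,\lambda)|\lesssim\sum_k k^{j+1}e^{-ck^2\tau}\sim\tau^{-(j+2)/2}$, non-integrable near $\tau=0$; the correct mechanism is the off-diagonal Gaussian decay of the heat kernel for $\lambda$ bounded away from $1$, which is precisely what the explicit image representation delivers. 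Note also that $C^2$ regularity up to $\lambda=1$ with merely continuous $u$ is delicate in either approach (the paper asserts it without detailed proof as well), so you are not worse off there, but the two points above are gaps specific to the spectral route that you should close.
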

\noindent
The proof is postponed to Appendix~\ref{tech lemma 1}.

We now define an infinite-dimensional KKL transformation $T_0$ so as to satisfy~\eqref{T_0 naif}, relying on Lemma~\ref{pb stable heat bien posé}.
To this end, it is required that, for every $x\in\mathcal X_0$, the trajectory $X(t;x)$ be defined for all $t\leq 0$, and that the function
\[
u(t):=-h(X(t;x)),
\]
is such that $e^{\gamma t}u(t)$ is bounded on $(-\infty,0)$.
These conditions are immediately fulfilled under the second condition of Assumption~\ref{ass:bound}. When the first condition is satisfied, inspired by~\cite{further_remarks}, we replace $f$ with the modified vector field $\chi(x)f(x)$, where $\chi(x)f(x)=f(x)$ for all $x\in\mathcal X$, and $\chi=0$ outside an open set containing the closure of $\mathcal X$. This modification leaves the trajectories of interest unchanged in forward time, while ensuring that they remain in a compact set in backward time. Consequently, $X(t;x)$ is well defined and $u(t) = -h(X(t;x))$ is bounded for all $t\leq0$.
In what follows, this modified vector field is still denoted by $f$.
\begin{defn}\label{def T0}
Suppose Assumption~\ref{ass:bound} holds. Let $x\in \mathcal X$.
Let $w_{u_x}$ be the unique solution of~\eqref{stable heat} with, for all $t\leq 0$, $u(t) = u_x(t) := -h(X(t;x))$ (this choice of $u$ is valid because of Assumption~\ref{ass:bound}).
    We define for all $x\in \mathcal{X}$, for all $\lambda\in [0,1]$, $$T_0(x,\lambda): = w_{u_x}(0,\lambda).$$
\end{defn}
A \emph{direct consequence} of Definition~\ref{def T0} is that for all $x\in \mathcal{X}$, $T_0(x,\cdot)\in \mathcal{A}$.
The next proposition shows that $T_0$ is indeed a solution to a heat equation along the flow of the ODE in the cascade~\eqref{eq:syst}, i.e $T_0$ is a solution of system~\eqref{T_0 naif}.
\begin{prop}\label{T_0alongflow} Using the same notations as in Definition~\ref{def T0}, the KKL mapping $\Tx$ satisfies, for all $x_0\in\mathcal{X}_0$, all $\tau\geq0$,
   and all $t\leq \tau$, 
        \[
    T_0(X(t;x_0), \lambda) = 
        w_{u_{x}}(t-\tau,\lambda),
\]
with $x := X(\tau;x_0)$.
\end{prop}
\begin{proof}

\medskip

\noindent\textbf{Step 1: The case $\tau= 0$.} Let $x \in \mathcal{X}$. By the flow property of the ODE in~\eqref{eq:syst} (using Assumption~\ref{ass:bound}), for all $t \leq 0$, and all $s \leq 0$, we have
\[
    u_{X(t;x)}(s) 
    = -h(X(s;X(t;x))) 
    = -h(X(s+t;x)) 
    = u_x(s+t).
\]
Consequently,
\begin{equation}\label{eq:flow_negative}
    T_0(X(t;x)) 
    = w_{u_{X(t;x)}}(0) 
    = w_{u_x(\cdot+t)}(0) 
    = w_{u_x}(t),
\end{equation}
where the last equality follows from the uniqueness of the bounded solution of~\eqref{stable heat}, as established in Lemma~\ref{pb stable heat bien posé}.

\medskip

\noindent\textbf{Step 2: The case $\tau\geq0$.}
Let $x_0 \in \mathcal{X}_0$.
The flow property of the cascade system~\eqref{eq:syst} yields
\begin{equation}\label{eq:flow_positive}
    X(t;x_0) = X(t-\tau;X(\tau;x_0)), \qquad \forall t \leq \tau.
\end{equation}
Let $x := X(\tau;x_0) \in \mathcal X$ and set $s := t-\tau \leq 0$.  
Combining~\eqref{eq:flow_negative} with~\eqref{eq:flow_positive}, we obtain
\[
    T_0(X(t;x_0)) 
    = T_0(X(s;x)) 
    = w_{u_{x}}(s) 
    = w_{u_{x}}(t-\tau),
\]
which completes the proof.
\end{proof}
Since the objective of this article is to construct an observer converging to the state of the cascade system~\eqref{eq:syst}, the next section is devoted to analyzing the injectivity of the now well-defined Backstepping--KKL transformation.
\section{Injectivity of the Backstepping-KKL transformation and convergence of the observer}
\label{injectivity of KKL}
As explained in Section~\ref{Observer naive}, the observer for the cascade~\eqref{eq:syst} will be constructed applying the inverse of  a Backstepping-KKL mapping $T$ to a solution of the KKL-target system~\eqref{KKL-target system}. In this section we prove the injectivity of $T$
and to do so we first prove the injectivity of $x\to T_0(x,\cdot)$. The main result of this article is stated at the end of the Section.
\subsection{Injectivity of \texorpdfstring{$T_0$}{T0}}
\begin{thm}\label{T0 inj}
    Under Assumptions~\ref{diff_obs} and~\ref{ass:bound}, the operator
    \fonction{T_0}{\mathcal{X}}{L^2(0,1)}{x}{\lambda\to T_0(x,\lambda)} is injective.
\end{thm}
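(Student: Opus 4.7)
The plan is to show that $T_0(x_1,\cdot) = T_0(x_2,\cdot)$ in $L^2(0,1)$ forces $L_f^k h(x_1) = L_f^k h(x_2)$ for every $k\geq 0$, and then to invoke Assumption~\ref{diff_obs}. The bridge between the PDE structure of $T_0$ and the ODE observability is a functional equation relating the spatial Laplacian of $T_0$ to the Lie derivative along $f$: evaluating the first line of~\eqref{T_0 naif}, which holds by Proposition~\ref{T_0alongflow}, at $t=0$ gives
\begin{equation*}
\partial_\lambda^2 T_0(x,\lambda) \;=\; (L_f+\gamma)\,T_0(x,\lambda), \qquad (x,\lambda)\in\mathcal{X}\times[0,1].
\end{equation*}

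Iterating this identity, and using that $L_f$ commutes with $\partial_\lambda$, I would then get $\partial_\lambda^{2k} T_0(x,\lambda) = (L_f+\gamma)^k T_0(x,\lambda)$ for every $k\in\mathbb{N}$. Specialising to $\lambda=1$ and using the boundary condition $T_0(x,1) = -h(x)$ from~\eqref{T_0 naif} produces the explicit formula
\begin{equation*}
\partial_\lambda^{2k} T_0(x,1) \;=\; -(L_f+\gamma)^k h(x), \qquad k\in\mathbb{N}.
\end{equation*}
By Lemma~\ref{pb stable heat bien posé}, both $T_0(x_1,\cdot)$ and $T_0(x_2,\cdot)$ lie in $\mathcal{A}$ and are therefore real-analytic (and in particular continuous) on $[0,1]$. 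The $L^2$ hypothesis of the theorem, combined with continuity, upgrades to pointwise equality on $[0,1]$, and the identity theorem for analytic functions then forces equality of all spatial derivatives at $\lambda=1$. Matching even derivatives thus yields $(L_f+\gamma)^k h(x_1) = (L_f+\gamma)^k h(x_2)$ for every $k\geq 0$.

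A short induction on $k$, using the binomial expansion $(L_f+\gamma)^k = L_f^k + \sum_{j<k}\binom{k}{j}\gamma^{k-j}L_f^j$ (the leading term being $L_f^k h$, and the lower-order ones already matched by the induction hypothesis), transfers this to $L_f^k h(x_1) = L_f^k h(x_2)$ for all $k\geq 0$. Specialising to $k=0,1,\ldots,m-1$ gives $H(x_1) = H(x_2)$, and Assumption~\ref{diff_obs} immediately delivers $x_1 = x_2$. The step I expect to be the most delicate is the justification of the iterated functional equation up to the closed endpoint $\lambda=1$, since the PDE~\eqref{T_0 naif} is a priori an interior statement; here the regularity $w_u(0,\cdot)\in C^2([0,1])$ granted by Lemma~\ref{pb stable heat bien posé}, together with the smoothness of $f$ and $h$ tacitly assumed after Assumption~\ref{diff_obs} so that the iterated Lie derivatives $L_f^k h$ make sense, is what legitimises both the iteration and the passage from $L^2$ equality to equality of all boundary derivatives.
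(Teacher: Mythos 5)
Your proposal is correct and follows essentially the same route as the paper: both proofs use the heat equation satisfied by $T_0$ along the flow to identify the even spatial derivatives of $T_0(x,\cdot)$ at $\lambda=1$ with iterated Lie derivatives of $h$, and then conclude by Assumption~\ref{diff_obs}. The only difference is organizational --- you first derive the closed-form identity $\partial_\lambda^{2k}T_0(x,1)=-(L_f+\gamma)^k h(x)$ and then strip off the $\gamma$ terms by a second (binomial) induction, whereas the paper interleaves the two steps in a single induction on the time derivatives $\partial_t^k w_{u_x}(0,\cdot)$.
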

\begin{proof}
Let $x_a,x_b\in\mathcal X$ be such that $T_0(x_a)=T_0(x_b)$ in $L^2(0,1)$.  
By the parabolic regularity of the heat equation~\cite[Chapter~7]{evans10}, the functions
\[
\lambda\mapsto T_0(x_a,\lambda),\qquad \lambda\mapsto T_0(x_b,\lambda)
\]
are continuous on $[0,1]$. Hence the equality actually holds for all
$\lambda\in[0,1]$, that is,
\begin{equation}\label{eq:T0-equal}
w_{u_{x_a}}(0,\lambda)=w_{u_{x_b}}(0,\lambda),
\qquad \forall \lambda\in[0,1],
\end{equation}
where
\[
u_{x_a}(s):=-h(X(s;x_a)),
\qquad
u_{x_b}(s):=-h(X(s;x_b)).
\]
We claim that for every $k\in\{0,\dots,m-1\}$,
\begin{equation}\label{eq:claim-k}
\partial_t^k w_{u_{x_a}}(0,\lambda)
=
\partial_t^k w_{u_{x_b}}(0,\lambda),
\qquad \forall \lambda\in[0,1].
\end{equation}

We prove \eqref{eq:claim-k} by induction on $k$.
For $k=0$, \eqref{eq:claim-k} is exactly \eqref{eq:T0-equal}.
Assume now that \eqref{eq:claim-k} holds for some
$k\in\{0,\dots,m-2\}$. Since the solutions are smooth with respect to
$(t,\lambda)$, time and space derivatives commute. Differentiating
\eqref{eq:claim-k} twice with respect to $\lambda$, we obtain
\[
\partial_t^k\partial_\lambda^2 w_{u_{x_a}}(0,\lambda)
=
\partial_t^k\partial_\lambda^2 w_{u_{x_b}}(0,\lambda),
\qquad \forall \lambda\in[0,1].
\]
Using
\[
\partial_t w=\partial_\lambda^2 w-\gamma w,
\]
we get
\[
\partial_t^k\bigl(\partial_t w_{u_{x_a}}+\gamma w_{u_{x_a}}\bigr)(0,\lambda)
=
\partial_t^k\bigl(\partial_t w_{u_{x_b}}+\gamma w_{u_{x_b}}\bigr)(0,\lambda).
\]
By the induction hypothesis \eqref{eq:claim-k}, this yields
\[
\partial_t^{k+1} w_{u_{x_a}}(0,\lambda)
=
\partial_t^{k+1} w_{u_{x_b}}(0,\lambda),
\qquad \forall \lambda\in[0,1].
\]
Thus \eqref{eq:claim-k} holds for all $k=0,\dots,m-1$.

Evaluating \eqref{eq:claim-k} at $\lambda=1$ and using the boundary relation
$w_u(t,1)=u(t)$, we obtain
\[
u_{x_a}^{(k)}(0)=u_{x_b}^{(k)}(0),
\qquad k=0,\dots,m-1.
\]
Since $u_x(s)=-h(X(x;s))$, this means
\[
L_f^k h(x_a)=L_f^k h(x_b),
\qquad k=0,\dots,m-1.
\]
Therefore,
\[
H(x_a)=H(x_b).
\]
Assumption~\ref{diff_obs} then implies $x_a=x_b$, which proves the injectivity of $T_0$.
\end{proof}

\subsection{Injectivity of \texorpdfstring{$T$}{T}} 
To prove the injectivity of $T$, we will need Theorem~\ref{T0 inj} and the following Lemma
\begin{lem}\label{tv cap a}
    The backstepping transformation $\Tv$ defined in Section~\ref{Observer naive} and $\mathcal{A}$ defined in Lemma~\ref{pb stable heat bien posé} satisfies
    $$\Tv(\mathcal{A})\cap \mathcal{A} = \{0\}.$$
\end{lem}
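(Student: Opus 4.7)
The plan is to exploit the defining property of $\mathcal{A}$: its elements are real-analytic on $[0,1]$ and all their odd-order derivatives vanish at $\lambda=0$. Thus, if I can show that any $v\in\mathcal{A}$ with $\Tv(v)\in\mathcal{A}$ must also have \emph{vanishing even-order} derivatives at $\lambda=0$, analyticity will force $v\equiv 0$, which is equivalent to the claim (by injectivity of the Volterra transform $\Tv$).

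I will first establish $v(0)=0$. Differentiating the Volterra expression defining $\Tv(v)$ once in $\lambda$ and evaluating at $\lambda=0$ yields
\[
\Tv(v)'(0) \;=\; v'(0) - p(0,0)\,v(0).
\]
Integrating the trace identity $\frac{dp}{d\lambda}(\lambda,\lambda) = -(\alpha+\gamma)/2$ from $\lambda$ to $1$, combined with $p(1,\cdot)=0$, gives $p(\lambda,\lambda)=(\alpha+\gamma)(1-\lambda)/2$, so $p(0,0)=(\alpha+\gamma)/2\neq 0$ (the degenerate case $\alpha+\gamma=0$ makes $p\equiv 0$ and $\Tv=\mathrm{Id}$, which must be excluded). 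Since both $v'(0)$ and $\Tv(v)'(0)$ vanish by membership in $\mathcal{A}$, this forces $v(0)=0$.

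Next I will derive the pointwise commutation identity
\[
(\partial_\lambda^2-\gamma)\,\Tv(v) \;=\; \Tv\bigl((\partial_\lambda^2+\alpha)v\bigr) + p_{\tilde\lambda}(\lambda,0)\,v(0) - p(\lambda,0)\,v'(0),
\]
valid for any sufficiently smooth $v$. This follows from two integrations by parts on $\int_0^\lambda p\, v_{\tilde\lambda\tilde\lambda}\,d\tilde\lambda$ combined with the kernel PDE $p_{\lambda\lambda}-p_{\tilde\lambda\tilde\lambda}=(\alpha+\gamma)p$ and the trace identity above. When $v\in\mathcal{A}$ with $v(0)=0$, the boundary terms on the right vanish and the identity collapses to $\mathcal{L}_{-\gamma}\,\Tv v = \Tv\,\mathcal{L}_\alpha v$, where $\mathcal{L}_\alpha:=\partial_\lambda^2+\alpha$ and $\mathcal{L}_{-\gamma}:=\partial_\lambda^2-\gamma$. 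A direct check shows both operators preserve $\mathcal{A}$ (differentiating an element of $\mathcal{A}$ twice and adding a scalar multiple of itself preserves analyticity and the vanishing of all odd derivatives at $0$).

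The proof then concludes by iteration. Setting $V_n:=\mathcal{L}_\alpha^n v$ and $Z_n:=\mathcal{L}_{-\gamma}^n \Tv(v)$, an induction on $n$ shows that $V_n,Z_n\in\mathcal{A}$ and $Z_n=\Tv(V_n)$ whenever $V_j(0)=0$ for all $j<n$; applying the base step (that $w\in\mathcal{A}$ together with $\Tv(w)\in\mathcal{A}$ forces $w(0)=0$) to the pair $(V_n,Z_n)$ then yields $V_n(0)=0$. Since $V_n(0)=v^{(2n)}(0)+$ a linear combination of $v^{(2j)}(0)$ for $j<n$, a nested induction forces $v^{(2n)}(0)=0$ for all $n\geq 0$. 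Combined with the vanishing odd-order derivatives granted by $v\in\mathcal{A}$, analyticity of $v$ on $[0,1]$ forces $v\equiv 0$. I expect the main technical obstacle to lie in Step~2: the commutation identity requires careful bookkeeping of the boundary contributions at both $\tilde\lambda=0$ and $\tilde\lambda=\lambda$ arising from the double integration by parts, and those are precisely the terms that must cancel for the iteration to carry through cleanly.
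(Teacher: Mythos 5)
Your proof is correct, and it rests on the same two pillars as the paper's own argument: the base step forcing $v(0)=0$ from $p(0,0)\neq 0$ together with the vanishing of odd-order derivatives of elements of $\mathcal{A}$, followed by an induction that kills all even-order derivatives of $v$ at $\lambda=0$ and concludes by analyticity. What differs is how the induction is organized. The paper differentiates the Volterra identity $z=\Tv v$ directly $k+1$ times and evaluates at $\lambda=0$, invoking the induction hypothesis to discard all lower-order boundary contributions, which leaves only $0=v^{(k+1)}(0)+p_{10}\,v^{(k)}(0)$ (a reduction it states without detailing). You instead package those boundary contributions once and for all into the intertwining identity $(\partial_\lambda^2-\gamma)\Tv v=\Tv\bigl((\partial_\lambda^2+\alpha)v\bigr)+p_{\tilde\lambda}(\lambda,0)v(0)-p(\lambda,0)v'(0)$ --- which is precisely the defining property of the backstepping kernel, and which I have checked against the kernel equations~\eqref{kernel equations p} --- and then iterate the second-order operators $\mathcal{L}_\alpha$ and $\mathcal{L}_{-\gamma}$, both of which visibly preserve $\mathcal{A}$. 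Your version trades the bookkeeping of high-order derivatives of the Volterra integral for one careful double integration by parts, and makes the structural reason for the cancellations transparent; the paper's version is shorter but leaves the key reduction implicit. One genuinely useful point you raise that the paper glosses over: $p_{10}=-p(0,0)=-(\alpha+\gamma)/2$, so both arguments require $\gamma\neq-\alpha$; in the degenerate case $\gamma=-\alpha$ one has $p\equiv 0$, $\Tv=\id$, and the lemma is false. Since $\gamma>\gamma_0>0$ is a free tuning parameter this is harmless, but it deserves to be stated explicitly.
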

\noindent The proof is postponed to Appendix~\ref{tech lemma 2}.
\begin{thm} \label{T inj}
    Under Assumptions~\ref{diff_obs} and~\ref{ass:bound}, the operator defined by
    \fonction{\Txv}{\mathcal{X}\times L^2(0, 1)}{L^2(0, 1)}{(x, v)}{\Tx(x) + \Tv v}
   is injective when restricted to $\mathcal{X}\times \mathcal{A}$.
\end{thm}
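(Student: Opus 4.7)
The plan is to reduce the statement to Theorem~\ref{T0 inj} and Lemma~\ref{tv cap a} by exploiting the linear, additive structure of $T$. Suppose we have two pairs $(x_a, v_a), (x_b, v_b) \in \mathcal{X} \times \mathcal{A}$ with $T(x_a, v_a) = T(x_b, v_b)$. I would rearrange this as
$$T_0(x_a) - T_0(x_b) = \Tv(v_b - v_a),$$
and show that the left-hand side lies in $\mathcal{A}$ while the right-hand side lies in $\Tv(\mathcal{A})$, so that both equal zero by Lemma~\ref{tv cap a}.

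For the left-hand side, Definition~\ref{def T0} guarantees $T_0(x_a, \cdot), T_0(x_b, \cdot) \in \mathcal{A}$. Since the conditions defining $\mathcal{A}$ (analyticity in $L^2(0,1)$ together with the vanishing $\frac{d^{2k+1}}{d\lambda^{2k+1}}(0) = 0$ for all $k \in \mathbb{N}$) are all linear, $\mathcal{A}$ is a linear subspace, and hence $T_0(x_a) - T_0(x_b) \in \mathcal{A}$. For the right-hand side, by hypothesis $v_a, v_b \in \mathcal{A}$, so $v_b - v_a \in \mathcal{A}$, and therefore $\Tv(v_b - v_a) \in \Tv(\mathcal{A})$. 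Lemma~\ref{tv cap a} then forces
$$T_0(x_a) - T_0(x_b) = \Tv(v_b - v_a) = 0.$$

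From $T_0(x_a) = T_0(x_b)$ and Theorem~\ref{T0 inj}, we deduce $x_a = x_b$. Finally, the Volterra-type backstepping transformation $\Tv$ is invertible on $L^2(0,1)$ (classical result from successive approximations, as cited in Section~\ref{Observer naive} via \cite{krstic_book}), so $\Tv(v_b - v_a) = 0$ yields $v_a = v_b$. This concludes the injectivity of $T$ on $\mathcal{X} \times \mathcal{A}$.

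The only nonroutine ingredient is Lemma~\ref{tv cap a}, which is invoked as a black box here; the rest of the argument is an essentially algebraic decoupling. The main conceptual point worth emphasizing — and the step I would double-check most carefully — is that the image of $T_0$ really does sit inside $\mathcal{A}$ and that $\mathcal{A}$ is closed under subtraction, since without this the separation of the two contributions via Lemma~\ref{tv cap a} would collapse.
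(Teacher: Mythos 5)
Your proof is correct and follows essentially the same route as the paper: rearrange the equality into $T_0(x_a)-T_0(x_b)=\Tv(v_b-v_a)$, observe that the left side lies in $\mathcal{A}$ and the right side in $\Tv(\mathcal{A})$, invoke Lemma~\ref{tv cap a} to conclude both vanish, and then finish with Theorem~\ref{T0 inj} and the invertibility of $\Tv$. Your explicit remark that $\mathcal{A}$ is a linear subspace (so that differences stay in $\mathcal{A}$) is a detail the paper leaves implicit, but it is the same argument.
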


\begin{proof}
     Let $x_a, x_b\in \mathcal{X}$ and $v_a, v_b \in \mathcal{A}$ such that $T(x_a, v_a) = T(x_b,v_b)$ i.e for all $\lambda \in  [0,1]$, \begin{equation} \label{egalite preuve injectivite}T_0(x_a,\lambda)-T_0(x_b,\lambda) = \Tv(v_b-v_a).\end{equation}
Because of Definition~\ref{def T0}, we have
$$T_0(x_a,\cdot)-T_0(x_b,\cdot)\in\mathcal{A}.$$
Hence, using Lemma~\ref{tv cap a}, we obtain

$$T_0(x_a, \cdot) = T_0(x_b,\cdot)$$
and
$$\Tv(v_b-v_a) = 0.$$
From the injectivity of \( T_0 \) (Theorem~\ref{T0 inj}) and the invertibility of \( \Tv \), we can deduce respectively that \( x_a = x_b \) and \( v_a = v_b \).
\end{proof}

\begin{rem}
       This choice of space restriction is relevant as for every solution $v$ of the PDE~\eqref{eq:syst}, the parabolic regularization of the heat equation implies $v(t) \in \mathcal{A}$ for all $t>0$~\cite[Chapter 7]{evans10}.
\end{rem}

\subsection{Observer convergence}
We can now state the main result of this article.
\begin{thm} \label{main result}
Under Assumption~\ref{ass:bound}, the Backstepping-KKL transformation $T$ from Theorem~\ref{T inj} is well-defined and surjective. Additionally, under the differential observability Assumption~\ref{diff_obs}, $T$ is injective when restricted to $\mathcal{X} \times \mathcal{A}$. 

Moreover, if it admits a uniformly continuous left-inverse $T^{-1}:L^2(0,1)\to \R^n\times L^2(0,1)$, then for all $x_0\in \mathcal{X}_0$, all $v_0\in L^2(0,1)$,
and all $\hat z_0 \in L^2(0, 1)$, $(\xhat (t), \vhat (t)) := T^{-1}(\zhat(t))$ with $\zhat$ the unique solution to \eqref{KKL-target system} with initial condition $\zhat_0$ 
is such that
\begin{equation*}
    \left | \xhat(t) - x(t)\right | + \|\vhat(t) - v(t)\|_{L^2} \underset{t\to+\infty}{\longrightarrow} 0,
\end{equation*}
with $(x,v)$ the unique solution to \eqref{eq:syst} with initial condition $(x_0, v_0)$.
\end{thm}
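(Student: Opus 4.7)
The plan is to split the statement into four pieces: well-definedness of $T$, surjectivity of $T$, injectivity of $T$, and the convergence claim. Well-definedness is a direct consequence of Definition~\ref{def T0} and Lemma~\ref{pb stable heat bien posé}: Assumption~\ref{ass:bound} ensures that for every $x \in \mathcal{X}$ the input $u_x = -h(X(\cdot\,;x))$ fulfills the growth condition required by Lemma~\ref{pb stable heat bien posé}, so $T_0(x,\cdot)$ is well-defined in $\mathcal{A} \subset L^2(0,1)$, while $\Tv$ is a bounded invertible operator on $L^2(0,1)$ from the standard backstepping analysis. Surjectivity follows in one line: fixing any $x_\star \in \mathcal{X}$, every $\zeta \in L^2(0,1)$ is attained as $T(x_\star,\Tv^{-1}(\zeta - T_0(x_\star,\cdot)))$. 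Injectivity on $\mathcal{X}\times\mathcal{A}$ is already established in Theorem~\ref{T inj}.

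The substantive part is the convergence analysis. I will set $z(t,\cdot) := T(x(t),v(t,\cdot))$ along the solution $(x,v)$ of~\eqref{eq:syst} and argue that $z$ itself solves the KKL target system~\eqref{KKL-target system} driven by the same measurement $y$. This should follow by adding the two PDEs satisfied separately by $T_0(x(t),\cdot)$ (via Proposition~\ref{T_0alongflow}, which gives~\eqref{T_0 naif}) and by $\Tv(v(t,\cdot))$ (via the construction of $\Tv$, which yields~\eqref{PDE-Target_system}). The boundary condition at $\lambda=1$ is the crucial cancellation: $T_0(x(t),1) = -h(x(t))$ kills the nonlinear input $h(x(t))$ appearing in~\eqref{PDE-Target_system}, leaving the homogeneous Dirichlet condition prescribed by~\eqref{KKL-target system}. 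The Neumann condition at $\lambda=0$ and the in-domain terms (including the distributed input $p_1(\lambda)y(t)$) then add up trivially, since $T_0$ contributes $0$ to both.

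Once this identification is in place, the error $\tilde z(t,\cdot) := \hat z(t,\cdot) - z(t,\cdot)$ satisfies an autonomous damped heat equation with Neumann boundary at $\lambda=0$, Dirichlet boundary at $\lambda=1$, and damping coefficient $-\gamma$. A standard energy estimate gives $\tfrac{d}{dt}\|\tilde z\|_{L^2}^2 \leq -2\gamma\|\tilde z\|_{L^2}^2$, hence $\|\tilde z(t,\cdot)\|_{L^2} \to 0$ exponentially. Since $v(t,\cdot) \in \mathcal{A}$ for all $t>0$ by the regularizing effect of the heat equation, the pair $(x(t),v(t,\cdot))$ lies in the domain on which $T$ is injective, and the assumed uniform continuity of $T^{-1}$ closes the argument via $(\hat x(t),\hat v(t,\cdot)) - (x(t),v(t,\cdot)) = T^{-1}(\hat z(t,\cdot)) - T^{-1}(z(t,\cdot)) \to 0$. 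The main obstacle, and the reason I would devote most attention to the central step, is making rigorous the claim that $T_0(x(t),\cdot)+\Tv(v(t,\cdot))$ is a genuine mild (or classical) solution of~\eqref{KKL-target system} rather than only a formal one; depending on the regularity of the initial data, this may require exploiting the instantaneous smoothing effect of the heat equation to work with strong solutions for $t>0$, or a density/approximation argument in the initial data.
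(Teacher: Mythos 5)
Your proposal is correct and follows essentially the same route as the paper: surjectivity from the invertibility of $\Tv$, injectivity from Theorem~\ref{T inj}, exponential decay of $\tilde z = \hat z - z$ for the stable heat equation, and the uniform continuity of $T^{-1}$ to conclude. The only difference is that you make explicit (and rightly flag as the delicate point) the verification that $z = T(x,v)$ genuinely solves~\eqref{KKL-target system}, a step the paper's proof leaves implicit by relying on the formal derivation of Section~\ref{Observer naive} and Proposition~\ref{T_0alongflow}.
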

\begin{proof}
    The surjectivity of $T:\mathcal{X}\times L^2(0,1) \to L^2(0,1)$ comes from the surjectivity of the backstepping transformation $\Tv$. The injectivity has been proved in Theorem~\ref{T inj}. Hence, if $\zhat$ is any solution of equation~\eqref{KKL-target system} with initial condition $\zhat_0 \in L^2(0,1)$, the uniform continuity of $T^{-1}$ implies: $\forall t\geq 0,$ $\forall \epsilon>0,~\exists\delta >0:$ $$\|\zhat(t) -z(t)\|_{L^2}<\delta \Rightarrow \|T^{-1}(\zhat(t)) -T^{-1}(z(t))\|_{L^2\times \R^n}=\left | \xhat(t) - x(t)\right | + \|\vhat(t) - v(t)\|_{L^2}<\epsilon.$$
    The conclusion follows from the fact that
    $\tilde{z}:= \zhat - z$ satisfies
    \begin{equation}
\left\{  
\begin{aligned}
    &\dot{\tilde{z}}(t, \lambda) = \frac{\partial^2 \tilde{z}}{\partial \lambda^2}(t, \lambda) -\gamma \tilde{z}(t, \lambda) 
    \\
    &\frac{\partial \tilde{z}}{\partial \lambda}(t, 0) =0
    \\
    &\tilde{z}(t, 1) = 0,
\end{aligned}
\right.
\end{equation}
hence,
$\tilde{z}$ converges exponentially fast towards $0$ for any $\gamma$ (see for instance~\cite{krstic_book}).
\end{proof}

\begin{rem}
Although the coefficient $\gamma$ in the proof of Theorem~\ref{main result} guarantees exponential convergence of $\tilde{z}$ to $0$, this does not directly entail exponential convergence of the observer states $\hat{x}$ and $\hat{v}$ to the cascade states $x$ and $v$, due to the dependence of $T^{-1}$ (if it exists) on $\gamma$. Under the additional assumption that $T^{-1}$ is globally Lipschitz, however, exponential convergence of $\tilde{z}$ to $0$ does imply exponential convergence of the observer states. For further development about the speed of convergence of KKL observers, please refer to~\cite{andrieuspeedy}.

\end{rem}
\begin{rem} \label{T-1 exists ?}
    We have established the surjectivity of $T:\mathcal{X}\times L^2(0, 1)\to L^2(0,1)$, but not of its restriction to $\mathcal{X}\times \mathcal{A}$. Hence, $T$ is a priori not invertible.
    Since $T$ restricted to $\mathcal{X}\times\mathcal{A}$ is injective (see Theorem~\ref{T inj}),
    it admits a left-inverse \(T^{\mathrm{inv}}:T(\mathcal{X}\times\mathcal{A})\to \mathcal{X}\times\mathcal{A}\), i.e., such that for all $x \in \mathcal{X}$, and all $v \in \mathcal{A}$
\[
T^{\mathrm{inv}}(T(x,v)) = (x,v).
\] 
However, the question of extending $T^{\mathrm{inv}}$ to a continuous map $T^{-1}:L^2(0,1)\to\mathcal{X}\times\mathcal{A}$ remains open, and
is the infinite-dimensional analogue of finding a left-inverse of a KKL mapping in finite dimension (see e.g~\cite{further_remarks}). If our mapping $T$ was a finite dimensional KKL mapping, one could use the method developed in~\cite{McShane1934}.
Because of the finite-dimensional nature of numerical simulations, in practice this issue can be solved using the standard methodology, e.g. by defining
$T^{-1}(z) = \argmin_{(x,v)\in C} \|z-T(x,v)\|_{L^2},$
where $C$ is the closure of $\mathcal{X}\times\mathcal{A}$ in the Hilbert space $\R^n\times L^2(0,1)$.

\end{rem}

\begin{rem}
    As a by-product of Theorem \ref{main result}, we have also obtained an infinite-dimensional observer of the ODE system $\dot x = f(x)$ with output $h(x)$.
    Indeed, $T_0$ is injective under the differential observability Assumption~\ref{diff_obs}.
    If it admits a uniformly continuous left-inverse $T_0^{-1}:L^2(0,1)\to \R^n$, then for all $x_0\in\mathcal{X}_0$ and all $\zhat_0\in L^2(0,1)$,
    $\xhat(t) := T_0^{-1}(\zhat(t))$ with $\zhat$ the unique solution to \eqref{eq:z_ode}
    with initial conditions $\zhat _0$ is such that
$
    \left | \xhat(t) - x(t)\right | \underset{t\to+\infty}{\longrightarrow} 0,
$
with $x$ the unique solution to $\dot x = f(x)$ with initial condition $x_0$.

\end{rem}

\section{Numerical simulations} 
In the following section, we consider examples of cascades that illustrate the convergence of our Backstepping-KKL observer. To do so, we consider specific choices of the functions \(f\) and \(h\) that allow us to compute \(T_0\) explicitly by solving a heat equation along the trajectories of the ODE in the cascade~\eqref{eq:syst} or more generally by solving for all $x\in \mathcal{X}$, for all $\lambda \in [0,1]$,

\begin{equation} \label{T_0 exemple}
\left\{
\begin{aligned}
    &\partial_x \Tx(x, \lambda)f(x)= \partial_{\lambda}^2\Tx(x, \lambda) - \gamma T_0(x,\lambda)
    \\
    &\partial_{\lambda}\Tx(x, 0) = 0
    \\
    &\Tx(x, 1) = -h(x).
\end{aligned}
\right.
\end{equation}
Some numerical simulations accompany these examples. We use finite differences to compute the solutions of the cascade~\eqref{eq:syst} and the KKL-target system~\eqref{KKL-target system}. The Backstepping-KKL map $T$ is approximated using an integral discretization method for the backstepping part and an evaluation on a grid of the real solution $T_0$. The observer is constructed by inverting the Backstepping-KKL mapping $T$ following the outline of the injectivity proof in Section~\ref{injectivity of KKL}. For both examples, we use the initial condition $v_0 : \lambda \to \cos(\pi\lambda)$ for the PDE-state $v$ in the cascade~\eqref{eq:syst}.

\label{numerics}
\subsection{Example 1: parameter estimation}
When the functions $f$ and $h$ are defined by $f(x) = 0$ and $h(x) = x$ for all $x \in \R$, the system~\eqref{eq:syst} reduces to a parameter identification problem for the underlying PDE, which is of independent interest in its own right (see e.g~\cite{adaptive_heat_obs}). Indeed, the observer $\xhat$ will converge to $v(t,1) =x(t) = x\in \mathbb{R}$ (the parameter to be estimated). 
The differential observability Assumption~\eqref{diff_obs} is satisfied in this case as the function $h$ is injective.
We now determine \( T_0 \), the solution of~\eqref{T_0 exemple}, by explicitly seeking a solution of the form
\[
T_0(x,\lambda) = a(\lambda) x.
\]
Injecting this ansatz in equation~\eqref{T_0 exemple}, we obtain the following ODE satisfied by $a(\cdot)$:
\begin{equation} \label{a ex 1}
\left\{
\begin{aligned}
   a''(\lambda) &= \gamma a(\lambda)
    \\
    a'(0) &= 0
    \\
    a(1)&=-1.
\end{aligned}
\right.
\end{equation}
One can verify that the solution to the ODE~\eqref{a ex 1} is
$a(\lambda):=-\frac{\cosh(\lambda\sqrt{\gamma})}{\cosh(\sqrt{\gamma})}$.
The following numerical simulations are obtained in the case $\alpha = 0.5$, $\gamma = 1$ and $x(t) = x =1$.
Figure~\ref{x and xhat} displays the evolution of the state $x$ together with its estimate $\xhat$. 
Furthermore, Figures~\ref{error x} and~\ref{error v} illustrate the $L^2$-norm of the estimation error for the ODE and the PDE components, respectively.
As we can see in Figure~\ref{x and xhat}-\ref{error v}, the observer $(\xhat, \vhat)$ converges exponentially fast towards the state $(x,v)$ of~\eqref{eq:syst}. Convergence was expected because of Theorem~\ref{main result}.  Nevertheless, the errors remain near $10^{-3}$ after $t\approx 2$. This residual discrepancy can be attributed primarily to the static error introduced when inverting $T$, as well as to the spatial and temporal discretization inherent in the numerical approximation of the dynamics.

\begin{figure}[H]\label{fig:ex1}
    \centering
\begin{subfigure}[b]{0.45\textwidth}
    \centering
    \includegraphics[width=\linewidth]{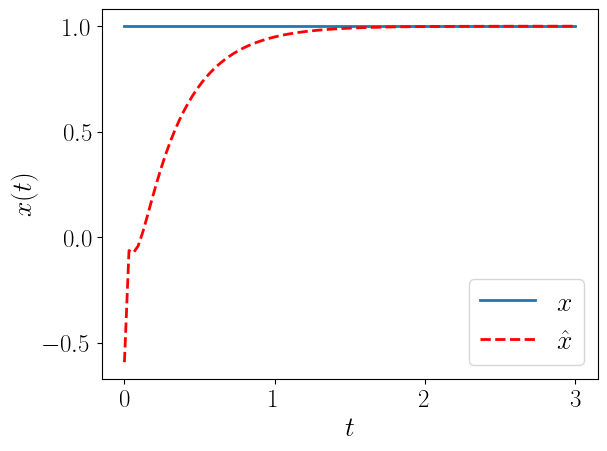}
    \caption{The ODE state $x$ and its estimation $\xhat$.}
    \label{x and xhat}
\end{subfigure}
\\
\medskip
\begin{subfigure}[b]{0.45\textwidth}
    \centering
    \includegraphics[width=\linewidth]{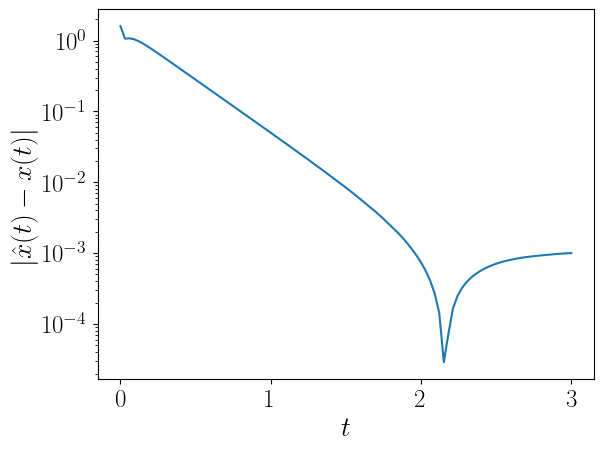}
    \caption{Absolute error between the ODE state $x$ and its estimation $\xhat$.}
    \label{error x}
\end{subfigure}
\quad
\begin{subfigure}[b]{0.45\textwidth}
    \centering
    \includegraphics[width=\linewidth]{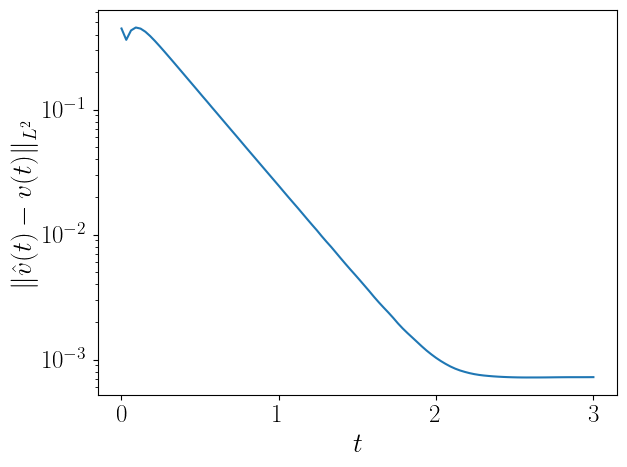}
    \caption{Error in $L^2$-norm between the PDE state $v$ and its estimation $\vhat$.}
    \label{error v}
\end{subfigure}
\end{figure}




\subsection{Example 2: harmonic oscillator with polynomial output}

Let us consider $f$ defined  from $\R^2 \to \R^2$ by $f(x) = (x_2,-x_1)$ and $h$ defined from $\R^2 \to \R$ by $h(x) = x_1^2 -x_2^2 + x_1+x_2$. This choice of functions is interesting in order to show the applicability of our Backstepping-KKL observer to nonlinear problems. The differential observability assumption~\eqref{diff_obs} is satisfied as the function defined from $\mathbb{R}^2$ to $\R^4$ by
\[
H(x)=\begin{pmatrix}
    h(x) \\
    L_f^1 h(x) \\
    L_f^2 h(x) \\
    L_f^3 h(x) 
\end{pmatrix} 
=
\begin{pmatrix}
x_1^2 - x_2^2 + x_1 + x_2 \\
4x_1 x_2 + x_2 - x_1 \\
-4\,(x_1^2 - x_2^2) - (x_1+x_2) \\
-16 x_1 x_2 - (x_1 + x_2)
\end{pmatrix},
\]
is injective. Indeed, the map 
\(
(x_1, x_2) \mapsto (x_1^2 - x_2^2, x_1 + x_2) \mapsto \big(h(x), L^2_fh(x)\big),
\) 
is injective. Similarly, 
\(
(x_1, x_2) \mapsto (x_1 x_2, x_1 - x_2) \mapsto \big(L^1_fh(x), L^3_fh(x)\big),
\)
is also injective (see~\cite[Section 3.2.1]{Brivadis_discrete}).

We now determine \( T_0 \), the solution of~\eqref{T_0 exemple}, by explicitly seeking it of the form
\[
T_0(x,\lambda) = a(\lambda)(x_1^2-x_2^2) + b(\lambda) x_1 +c(\lambda)x_2+d(\lambda)x_1x_2.
\]
Injecting this ansatz in equation~\eqref{T_0 exemple}, we obtain the following system of ODEs for the functions $a$, $b$, $c$, $d$. 
\[
\begin{cases}
a''(\lambda) = \gamma a(\lambda) - d(\lambda), \\
b''(\lambda) = \gamma b(\lambda) - c(\lambda), \\
c''(\lambda) = \gamma c(\lambda) +b(\lambda), \\
d''(\lambda) = \gamma d(\lambda) +4a(\lambda), \\
\end{cases}
\]
with the boundary conditions
\[
\begin{cases}
a'(0) = 0, \quad b'(0) = 0, \quad c'(0) =0, \quad d'(0)=0, \\
a(1) = -1, \quad b(1) = -1, \quad c(1) = -1, \quad d(1)=0.
\end{cases}
\]
We present below the numerical results obtained for the parameter values $\alpha = 0$, $\gamma =3$ and $x_0=(0.1,0.1)$. Figure~\ref{x and xhat osci} displays the evolution of the state $x$ together with its estimate $\xhat$. 
Furthermore, Figures~\ref{error x osci} and~\ref{error v osci} illustrate the $L^2$-norm of the estimation error for the ODE and the PDE components, respectively. Finally, Figures~\ref{error dynamic gamma =0.5}-\ref{error dynamic gamma =10} depict the $L^2$-norm of the estimation error in approximating the dynamics of $T(x,v)$ by $\zhat$, a solution of~\eqref{KKL-target system}.

\begin{figure}[H]\label{fig:ex2}
    \centering
\begin{subfigure}[b]{0.45\textwidth}
    \centering
    \includegraphics[width=\linewidth]{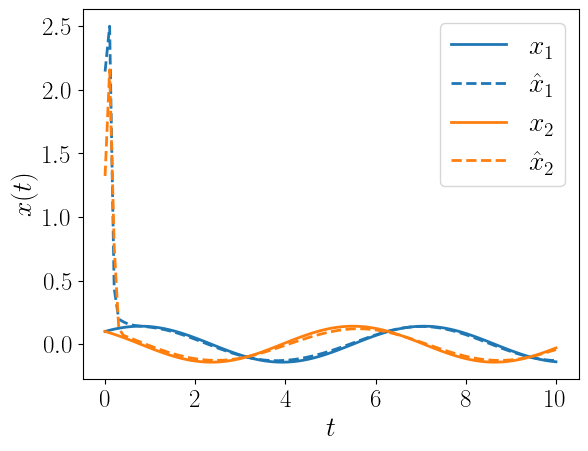}
    \caption{The ODE state $x$ and its estimation $\xhat$.}
    \label{x and xhat osci}
\end{subfigure}
\\
\medskip
\begin{subfigure}[b]{0.45\textwidth}
    \centering
    \includegraphics[width=\linewidth]{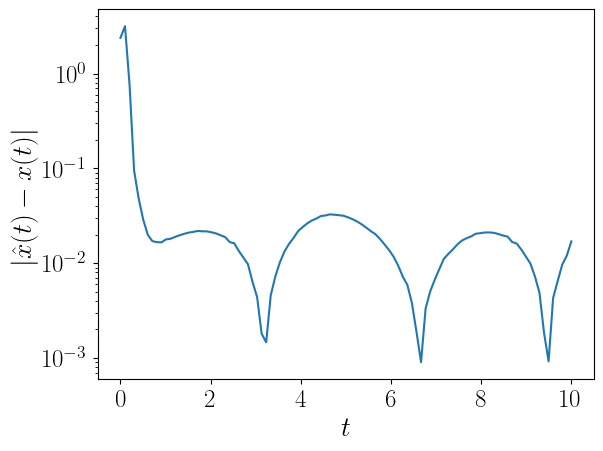}
    \caption{Absolute error between the ODE state $x$ and its estimation $\xhat$.}
    \label{error x osci}
\end{subfigure}
\quad
\begin{subfigure}[b]{0.45\textwidth}
    \centering
    \includegraphics[width=\linewidth]{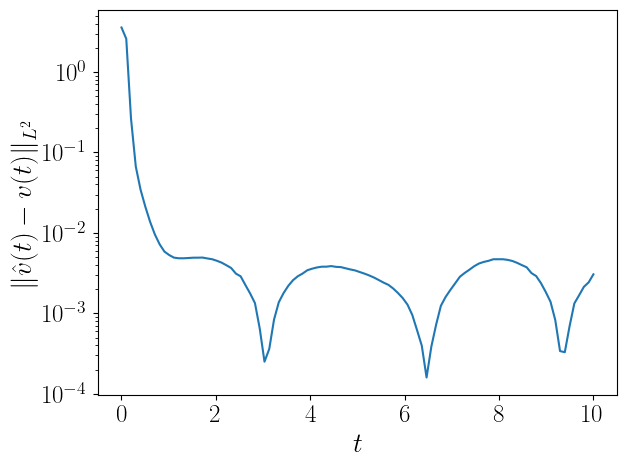}
    \caption{Error in $L^2$-norm between the PDE state $v$ and its estimation $\vhat$.}
    \label{error v osci}
\end{subfigure}
\end{figure}

As we can see in Figures~\ref{x and xhat osci}-\ref{error v osci}, the observer $(\xhat, \vhat)$ converges exponentially fast towards the state $(x,v)$ of~\eqref{eq:syst}. Convergence was expected because of Theorem~\ref{main result}.  Nevertheless, the errors remain near $10^{-2}$ after $t\approx 2$. This residual discrepancy can be attributed primarily to the static error introduced when inverting $T$, as well as to the spatial and temporal discretization inherent in the numerical approximation of the dynamics.

In the sequel, we report the error between $T(x(t), v(t))$ and $\zhat(t)$ for various values of the parameter $\gamma$. This error serves as a quantitative measure of the accuracy of our numerical approximation of the Backstepping--KKL mapping $T$. The observed oscillations, typically ranging between $10^{-3}$ and $10^{-4}$, can be regarded as the minimal discrepancy attainable between the true state $(x,v)$ and its estimate $(\xhat,\vhat)$.  

It is important to emphasize that, in practice, the observers $\xhat$ and $\vhat$ must be obtained by inverting the transformation $T$. Since neither the existence of $T^{-1}$ (see Remark~\ref{T-1 exists ?}) has been established nor its uniform continuity quantified (for instance, through the computation of a Lipschitz constant), the amplification of the error between $(x,v)$ and $(\xhat,\vhat)$ by one or two orders of magnitude ($10^{1},10^{2}$) is to be expected. This also explains the comparatively smaller errors observed in the first illustrative example (Figures~\ref{error x},~\ref{error v}), where the Backstepping--KKL mapping $T$ is easier to invert in practice.

\begin{figure}[H]\label{fig:ex3}
    \centering
\begin{subfigure}[b]{0.45\textwidth}
    \centering
    \includegraphics[width=\linewidth]{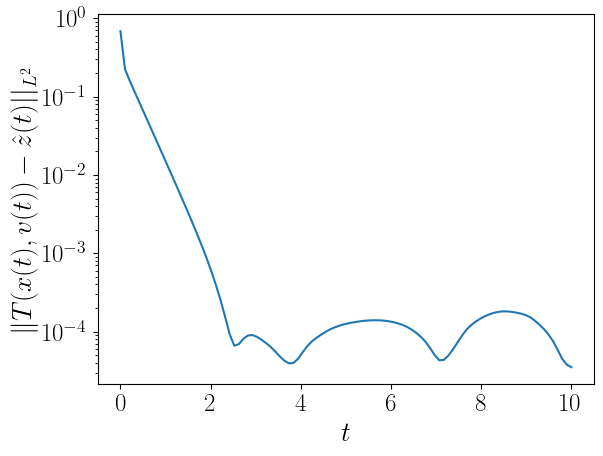}
   \caption{Error in $L^2$-norm between $\zhat$ and $T(x,v)$ for $\gamma=0.5$.}
    \label{error dynamic gamma =0.5}
\end{subfigure}
\\
\medskip
\begin{subfigure}[b]{0.45\textwidth}
    \centering
    \includegraphics[width=\linewidth]{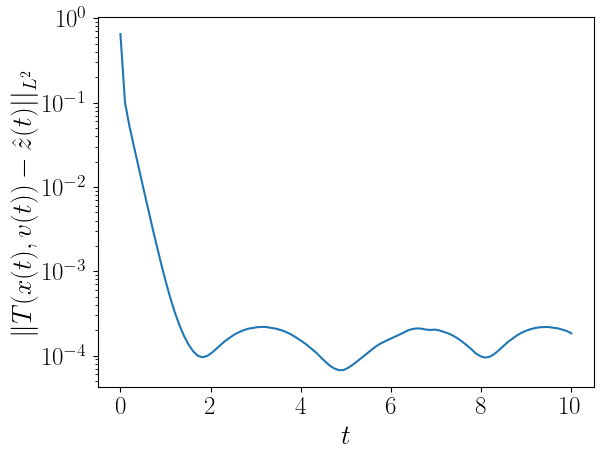}
    \caption{Error in $L^2$-norm between $\zhat$ and $T(x,v)$ for $\gamma=3$.}
    \label{error dynamic gamma =3}
\end{subfigure}
\quad
\begin{subfigure}[b]{0.45\textwidth}
    \centering
    \includegraphics[width=\linewidth]{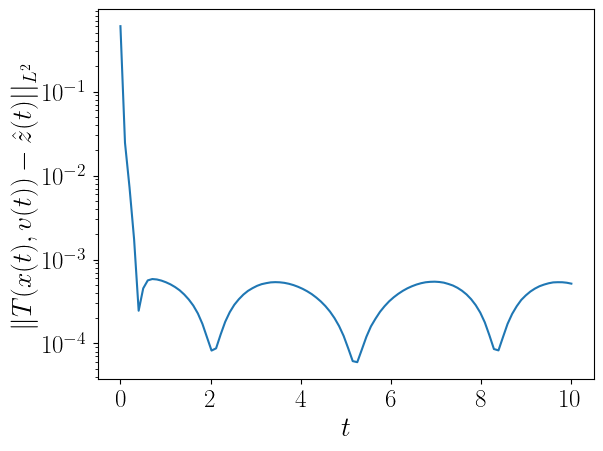}
    \caption{Error in $L^2$-norm between $\zhat$ and $T(x,v)$ for $\gamma=10$.}
    \label{error dynamic gamma =10}
\end{subfigure}
\end{figure}

From the plots in Figures~\ref{error dynamic gamma =0.5}-\ref{error dynamic gamma =10}, one can observe that increasing $\gamma$ accelerates the convergence of $\zhat$ towards $T(x,v)$. In particular, the time required for the error to first reach the threshold $10^{-3}$ decreases as $\gamma$ grows. Nevertheless, the optimal choice of $\gamma$, in the sense of minimizing the discrepancy between the true states $(x,v)$ and the observers $(\xhat,\vhat)$, is not necessarily attained for the largest values of $\gamma$, since both $T^{-1}$ and its Lipschitz constant depend on $\gamma$ in an a priori unknown manner.
In practice, $\gamma\approx 3$ seems near optimality in these simulations.

\section{Conclusion}
In this paper, we have designed an observer for a cascade system composed of an ODE coupled with a one-dimensional heat equation. The Backstepping-KKL transformation~$T$ used in the observer design is constructed as the sum of a KKL mapping obtained by solving a steady-state problem without an initial condition, and a backstepping transformation for the one-dimensional heat equation. The mapping~$T$ is always surjective and becomes injective under a differential observability condition. As a direction for future work, we aim to extend our approach to cascade systems composed of an ODE and a general one-dimensional linear parabolic or hyperbolic PDE.
This work is also a first step towards the use of KKL observers for more general nonlinear infinite-dimensional systems.

\appendix

\section{Proof of Lemma~\ref{pb stable heat bien posé}}
\label{tech lemma 1}
    In order to establish the well-posedness of system~\eqref{stable heat}, we first prove the existence of a solution by introducing a candidate inspired 
by the solution of a related problem studied by Tikhonov. The uniqueness of the solution shall be proved using a Poincaré type inequality and a Lyapunov function.
Lastly, we prove for all $t\leq0$, $w_u(t,\cdot)\in \mathcal{A}$ directly using the explicit form of $w_u$.

\paragraph{Existence of a solution:}
It has been shown in~\cite[Chapter III, section 3.4]{TikhonovSamarskii1990}
that for any bounded piecewise continuous function $\psi$, the function $w\in C^1((-\infty, 0], C^2([0,1]))$ defined for all $t \leq 0$ and for all $\lambda \in [0,+\infty)$ by $$w(t,\lambda) := \int_{-\infty}^t \psi(\tau)G(\lambda,t-\tau)d\tau$$
with $$G(\lambda, \tau) :=\frac{\lambda}{2\sqrt{\pi}\tau^{3/2}}\exp\left({\frac{-\lambda^2}{4\tau}}\right),$$
is the unique bounded solution of the PDE
\begin{equation}\label{tick meta}\begin{cases}
    \partial_t w(t,\lambda) &= \partial_{\lambda}^2 w(t,\lambda)\\
    w(t,0) &= \psi(t).
\end{cases}\end{equation}
Hence, let us first introduce a candidate solution for problem~\eqref{stable heat inter} defined by \begin{equation}\label{wu candidat}
    w_q(t,\lambda) := \int_{-\infty}^t \psi_q(\tau)(G(\lambda-1,t-\tau)-G(\lambda+1,t-\tau))d\tau,
\end{equation}
\begin{equation}
    \label{stable heat inter}
    \begin{cases}
    \partial_t w_q(t,\lambda) &= \partial_{\lambda}^2 w_q(t,\lambda) \\
    \partial_{\lambda}w_q(t,0)&=0 \\
    w_q(t,1) &= q(t),
\end{cases}
\end{equation}
where $\psi_q$ is a bounded piecewise continuous function to be defined and $q$ is a continuous bounded function on $(-\infty,0]$. The function $w_q$ is in $C^1((-\infty,0]), C^2([0,1]))$ and for all $t\leq0$, for all $\lambda \in [0,1]$, 
as $w_q(t,\lambda) = (w(t,\lambda-1) - w(t,\lambda+1))$ with $w$ solution of problem~\eqref{tick meta} with $\psi = \psi_q$, we have
\begin{align*}
    \partial_t w_q(t,\lambda) &= \partial_t(w(t,\lambda-1) - w(t,\lambda+1)) \\
    &=\partial_{\lambda}^2(w(t,\lambda-1) - w(t,\lambda+1)) \\
    &=\partial_{\lambda}^2 w_q(t,\lambda).
\end{align*}
Since $\partial_{\lambda} G(\lambda, \tau) = \frac{1}{2\sqrt{\pi}\tau^{3/2}}(1-\frac{\lambda^2}{2\tau})\exp\left({\frac{-\lambda^2}{4\tau}}\right)$, taking the derivative under the integral, we obtain
\begin{align*}\partial_{\lambda}w_q(t,0) &= \int_{-\infty}^t \psi_q\partial_{\lambda}(G(-1,t-\tau)-G(1,t-\tau))d\tau=0.\end{align*}
Next, we find $\psi_q$ a piecewise bounded solution on $(-\infty,0]$ to fulfill the boundary condition $w_q(t,1) = q(t)$. 
\begin{align*}w_q(t,1)&=\int_{-\infty}^t\psi_q(\tau) G(0,t-\tau)d\tau - \int_{- \infty}^t\psi_q(\tau)G(2,t-\tau)d\tau\\
&=w(t,0) -  \int_{- \infty}^t\psi_q(\tau)G(2,t-\tau)d\tau
\\ &= \psi_q(t) -\int_{- \infty}^t\psi_q(\tau)G(2,t-\tau)d\tau.\end{align*}
Replacing $w_q(t,1)$ by $q(t)$ we want to solve the following Volterra equation of the second kind with infinite horizon,
\begin{equation}\label{volterra}q(t) = \psi_q(t)-\int_{- \infty}^t\psi_q(\tau)G(2,t-\tau)d\tau \quad \text{for all $t\leq0$.}\end{equation}
According to~\cite[Theorem 4.5, Chapter 2]{GripenbergLondenStaffans1990}, as $G(2,\cdot)\in L^1(0, +\infty)$, equation~\eqref{volterra} admits a unique bounded piecewise continuous solution if and only if $1 +G(2,s)\neq 0$ for all $s\in \mathbb{C}$ with $\Re(s) \geq 0$ where $G(2,s)$ is the Laplace transform of $G(2,\cdot)$.
By~\cite[Theorem 2.8, Chapter 2]{GripenbergLondenStaffans1990}, the Laplace transform of $G(2,\cdot)\in L^1(0, +\infty)$ is well defined on $\{s\in \mathbb{C}, \Re(s) \geq0\}$. 
Let us compute this Laplace transform, for $s\in \{s\in \mathbb{C}, \Re(s) \geq 0\} $. Using~\cite[Formula 9 section 3.471 page 368]{gradshteyn2007table}, we get
\begin{align*}
    G(2,s) &= \frac{1}{\sqrt{\pi}}\int_0^{\infty} \frac{1}{t^{3/2}}e^{-(1/t +st)}dt = \frac{1}{\sqrt{\pi}} 2s^{1/4}K_{1/2}(2\sqrt{s}) 
    =e^{-2\sqrt{s}},
\end{align*}
where $K_{1/2}$ is the modified bessel function of the second kind of order $1/2$ (see \cite[Section 3.71 page 80, formula (13)]{WatsonBessel}).
Consequently, the condition  $1 +G(2,s)\neq 0$ for all $s\in \mathbb{C}$ with $\Re(s) \geq 0$ is fulfilled, which implies the existence of a bounded solution of~\eqref{stable heat inter}.
Then, a solution of system~\eqref{stable heat} is given by $w_u(t,\lambda) =e^{-\gamma t} w_q(t,\lambda)$ with $q(t) = e^{\gamma t}u(t).$
As $w_q$ is bounded (because $w$ is), there exists $C>0$ such that for all $t\leq0$ 
$$
\|w_u(t,\cdot)\|_{L^2(0,1)} \leq C e^{-\gamma t}.
$$
\paragraph{Uniqueness of the solution:} Using the linearity of problem~\eqref{stable heat}, we only need to show that if $w\in C^1((-\infty,0], C^2([0,1]))$ is a solution of
\begin{equation}\begin{cases}
    \partial_t w(t,\lambda) &= \partial_{\lambda}^2 w(t,\lambda) - \gamma w(t,\lambda)\\
    \partial_{\lambda}w(t,0)&=0 \\
    w(t,1) &= 0,
\end{cases}\end{equation}
such that there exists $C>0$ such that for all $t\leq0$ 
$$
\|w(t,\cdot)\|_{L^2(0,1)} < C e^{-\gamma t}$$
then $w \equiv 0$.
We introduce $V\in C^1(-\infty,0)$ defined for all $t\in (-\infty,0]$ by $V(t):= \frac{1}{2}\|w(t,\cdot)\|^2_{L^2}$. 
Then, for all $t\in (-\infty,0)$, \begin{align*}\frac{d}{dt}V(t) &=\int_0^1 w(t,\lambda)\partial_tw(t,\lambda)d\lambda\\
&=\int_0^1w(t,\lambda)\partial_{\lambda^2}w(t,\lambda)d\lambda - \gamma\int_0^1w(t,\lambda)^2d\lambda\\
&=-\int_0^1\partial_{\lambda}w(t,\lambda)^2d\lambda - \gamma\int_0^1w(t,\lambda)^2d\lambda\end{align*} where we have used integration by parts.
We now show a variant of the Poincaré inequality. As $w(t,1) = 0$, we have
$w(t,\lambda) = -\int_{\lambda}^1 \partial_{\lambda}w(t,\tilde{\lambda})d\tilde{\lambda}$.
Hence, using Cauchy-Schwarz inequality, we get $$w(t,\lambda)^2\leq \left(\int_{\lambda}^1\ \partial_{\lambda}w(t,\tilde{\lambda})d\tilde{\lambda}\right)^2\leq (1-\lambda) \left(\int_{\lambda}^1\ \partial_{\lambda}w(t,\tilde{\lambda})^2d\tilde{\lambda}\right).$$
 Then, integrating the previous inequality on $[0,1]$ and using Fubini's theorem, we obtain
\begin{align*}\int_0^1 w(t,\lambda)^2d\lambda&\leq\int_0 ^1(1-\lambda) \left(\int_{\lambda}^1\ \partial_{\lambda}w(t,\tilde{\lambda})^2d\tilde{\lambda}\right)d\lambda\\&\leq \int_0^1 \partial_{\lambda}w(t,\tilde{\lambda})^2\int_0^{\tilde{\lambda}}(1-\lambda)d\lambda d\tilde{\lambda}\\&\leq \int_0^1 (\tilde{\lambda}-\tilde{\lambda}^2/2)\partial_\lambda w(t,\tilde{\lambda})^2d\tilde{\lambda} \\&\leq \frac{1}{2}\int_0^1 \partial_{\lambda}w(t,\lambda)^2d\lambda.\end{align*}
Using the previous inequality, we get
$$\frac{d}{dt}V(t)\leq -(2+\gamma)\int_0^1 w(t,\lambda)^2d\lambda\leq -2(2+\gamma)V(t).$$
Applying a comparison lemma, we have for all $t_0\in (-\infty,0)$ and $t\geq t_0$,
\begin{equation}\label{V}V(t) \leq V(t_0)e^{-2(2+\gamma)(t-t_0)}\leq Ce^{-\gamma t_0}e^{-2(2+\gamma)(t-t_0)} = Ce^{(-4-2\gamma)t}e^{(4+\gamma)t_0}.\end{equation}Taking the limit as $t_0\to -\infty$ in equation~\eqref{V} we obtain $V(t)\leq 0$. Hence, for all $t\in(-\infty,0)$, $V(t) = 0$, this implies $w\equiv 0$.
 \paragraph{Belonging to $\mathcal{A}:$}
Finally, let us show that for all $t\leq 0$, $w_u(t,\cdot)\in \mathcal{A}$. By adding an artificial initial condition at $T$ to the problem~\eqref{stable heat} for \(T<0\), we can exploit the parabolic regularity of the heat equation (see, e.g.,~\cite[Chapter~7]{evans10}). 
As a result, \(w_u(t,\cdot)\) is analytic for all \(t \in (T,0)\), and \(w_u \in C^{\infty}([T,\infty)\times [0,1])\), for any fixed \(T \leq 0\). 
Moreover, the spatial and temporal derivatives of \(w_u\) commute.

Let $t\leq 0$ and $k\in\mathbb{N}$, let us prove $\partial_{\lambda}^{2k+1}w_u(t,0)=0$. Looking at the explicit form of $w_u$ given in equation~\eqref{wu candidat}, we only need to show for all $\tau \in (-\infty,t]$
$$\partial_{\lambda}^{2k+1}g(0, \tau)=0, $$
with
$$g(\lambda,\tau):=G(\lambda-1,t-\tau)-G(\lambda+1,t-\tau).$$Noticing that \( g \) is even in \( \lambda \), i.e.,
\[
g(-\lambda,\tau) = g(\lambda,\tau), \quad \text{for all } \lambda \in [0,1],
\]
and because every smooth and even function has all its odd-order derivatives vanishing at \( \lambda = 0 \), we can conclude the proof.
\hfill $\blacksquare$

\section{Proof of Lemma~\ref{tv cap a}}
\label{tech lemma 2}

 Let $v\in \mathcal{A},~z\in \mathcal{A}$ such that \begin{equation}
        \label{eq zv}
        z= \Tv v
    \end{equation}
We want to show $v=z=0\in L^2(0,1)$.
  As both \( z \) and \( v \) are analytic functions, it is sufficient to prove that for all \( k \in \mathbb{N} \),
$$
 \frac{d^k z}{d\lambda^k} (0)
 = \frac{d^kv }{d\lambda^k} (0)
 = 0.
$$
Let us prove it by induction. For $k=0$, differentiating equation~\eqref{eq zv} we obtain for all $\lambda \in [0,1]$
$$\frac{dz}{d\lambda}(\lambda) =\frac{dv}{d\lambda}(\lambda) - p(\lambda, \lambda)v(\lambda)-\int_0^{\lambda} \partial_{\lambda}p(\lambda,\tilde{\lambda})v(\tilde{\lambda})d\tilde{\lambda}. $$
Evaluating the previous equation at $\lambda =0$, because $v,z\in \mathcal{A}$ and $p_{10}\neq0$ (Section~\ref{Observer naive}), we get
$$v(0)=0.$$
Using equation~\eqref{eq zv}, we obtain
$$z(0) = (\Tv v)(0) = v(0) = 0.$$
Suppose the property holds for $k-1\geq0$, let us prove it for $k$. 

If \( k \) is odd, the result follows directly from the definition of \( \mathcal{A} \), since \( v, z \in \mathcal{A} \) implies all odd-order derivatives vanish at \( \lambda = 0 \). If \( k \) is even, let us differentiate equation~\eqref{eq zv} \( k+1 \) times and evaluate at \( \lambda = 0 \). This gives
$$0=\frac{d^{k+1}z}{d\lambda^{k+1}}(0) =\frac{d^{k+1}v}{d\lambda^{k+1}}(0) + p_{10}\frac{d^{k}v}{d\lambda^{k}}(0). $$
Since \( \frac{d^{k+1} z}{d\lambda^{k+1}}(0) = 0 \) (as \( z \in \mathcal{A} \) and \( k+1 \) is odd), and \( p_{10} \neq 0 \), it follows that 
$$\frac{d^{k}v}{d\lambda^{k}}(0)=0.$$
Finally, differentiate equation~\eqref{eq zv} $k$ times then evaluate it at $\lambda = 0$
$$\frac{d^{k}z}{d\lambda^{k}}(0)=\frac{d^{k}v}{d\lambda^{k}}(0)=0.$$
This completes the induction, and hence the proof.
\hfill $\blacksquare$

\bibliographystyle{abbrv}
\bibliography{references}

\end{document}